\theoremstyle{definition} 
\newtheorem{theorem}{Theorem}[section]
\newtheorem{theorem*}{Theorem}
\newtheorem{lemma}[theorem]{Lemma}
\newtheorem{proposition}[theorem]{Proposition}
\newtheorem{corollary}[theorem]{Corollary}
\newenvironment{customthm}[1]
  {\innercustomthm}
  {\endinnercustomthm}
\newtheorem{definition}[theorem]{Definition}
\newtheorem*{definition*}{Definition}
\newtheorem{example}[theorem]{Example}
\newtheorem{remark}[theorem]{Remark}
\newtheorem*{remark*}{Remark}
\newcommand{\Diff}{\operatorname{Diff}}
\begin{document}

\begin{abstract}
We give two applications of the the duality between the Homogeneous Complex Monge-Amp\`ere Equation (HCMA) and the Hele-Shaw flow.  First, we prove existence of smooth boundary data for which the weak solution to the Dirichlet problem for the HCMA over $\mathbb P^1\times \overline{\mathbb D}$ is not twice differentiable at a given collection of points, and also examples that are not twice differentiable along a set of codimension one in $\mathbb{P}^1\times \partial \mathbb{D}$.   Second, we discuss how to obtain explicit families of smooth geodesic rays in the space of K\"ahler metrics on on $\mathbb P^1$ and on the unit disc $\mathbb D$ that are constructed from an exhausting family of increasing smoothly varying simply connected domains.  
\end{abstract}

\title[Applications of Duality of Hele-Shaw flow]{Applications of the duality between the Complex Monge-Amp\`ere Equation and the Hele-Shaw flow}

\author{Julius  Ross and David Witt Nystr\"om}
\maketitle
\section{Introduction}
The purpose of this paper is to give two applications of previous work of the authors that describes a duality between a certain Dirichlet problem for the Homogeneous Complex Monge-Amp\`ere Equation (HCMA) and a free boundary problem in the plane called the Hele-Shaw flow \cite{RW}.    First, for any finite set of points in $\mathbb{P}^1\times \partial \mathbb{D}$, where $\mathbb D\subset \mathbb C$ denotes the unit disc, we give examples of smooth boundary data for which the weak solution to this Dirichlet problem over $\mathbb P^1\times \overline{\mathbb D}$ is not twice differentiable at these points. We also produce such examples that are not twice differentiable along a set of codimension one in $\mathbb{P}^1\times \partial \mathbb{D}$.   Second, we use this duality to produce families of regular solutions to this Dirichlet problem over the punctured disc $\overline{\mathbb D}^{\times}$, giving explicit families of smooth geodesic rays in the space of K\"ahler metrics on $\mathbb P^1$ and on $\mathbb D$.   

\subsection{Regularity of the Dirichlet problem for the HCMA over the disc}

The setup for the first application is as follows.    Fix a chart $0\in \mathbb C\subset \mathbb P^1$ with coordinate $z$ and let $\omega$ denote the Fubini-Study form.   Choose  a K\"ahler potential $\phi\in C^{\infty}(\mathbb P^1)$, by which we mean $\omega+dd^c\phi$ is a strictly positive form (i.e.\ an area form) and let $\pi_{\mathbb P^1}:\mathbb P^1\times\overline{\mathbb D}\to \mathbb P^1$ be the projection.   Consider the envelope
\begin{equation}
 \Phi := \sup 
 \left\{
\begin{array}{c}
  \psi \colon \mathbb P^1\times \overline{\mathbb D}\to \mathbb R\cup \{-\infty\} : \psi \text{ is usc, }\pi_{\mathbb P^1}^*\omega + dd^c\psi \ge 0 \\\text{ and } \psi(z,\tau)\le \phi(\tau z) \text{ for } (z,\tau)\in \mathbb P^1\times  \partial\mathbb D
 \end{array}
 \right\},\label{eq:weakD}
\end{equation}
which is the weak solution to the Dirichlet problem
\begin{align*}
\Phi(z,\tau) = \phi(\tau z) &\text{ for }(z,\tau)\in \mathbb P^1\times \partial \mathbb D,\\
\pi_{\mathbb P^1}^*\omega + dd^c\Phi&\ge 0, \\
(\pi_{\mathbb P^1}^* \omega + dd^c\Phi)^{2}&=0,
\end{align*}
where the second equation is to be understood in the sense of currents and the third is the Bedford-Taylor product.

The following is a preliminary version of what we shall prove:

\begin{customthm}{A}\label{thm:notc2:preliminary}
Let $S$ be a union of finitely many points and non-intersecting smooth curve segments in $\mathbb P^1\setminus \{0\}$.  Then there exists a K\"ahler potential such that the above weak solution $\Phi$  to the HCMA is not twice differentiable at any point of the form $(\tau^{-1} z,\tau), z\in S, |\tau|=1$. 
\end{customthm}

The question of regularity of solutions to the HCMA has a long history, and has proved to be a difficult problem that depends subtly on the boundary data (see, for example, Lempert \cite{Lempert}, Bedford-Demailly \cite{Bedford} or B\l ocki \cite{Blocki}).  As is well known, if $\overline{\mathbb D}$ is replaced by a closed annulus in $\mathbb C$, and $\mathbb P^1$ is replaced by any K\"ahler manifold $X$,  the above Dirichlet problem with $S^1$-invariant boundary data corresponds to finding a geodesic segment in the space of K\"ahler potentials on $X$ (with respect to the Mabuchi metric).  Similarly if $\overline{\mathbb D}$ is replaced by the punctured disc $\overline{\mathbb D}^{\times}$ it corresponds to finding a geodesic ray.  The regularity of these geodesics has been of intense interest ever since this space was considered by Mabuchi \cite{Mabuchi}, Semmes \cite{Semmes} and Donaldson \cite{Donaldson}.   However it is only since the relatively recent work of Lempert-Vivas \cite{LempertVivas}, Lempert-Darvas \cite{LempertDarvas} and Darvas \cite{Darvas} that we have known that it is not always possible to join two potentials by a geodesic segment that lies in the class $C^2$.

What we have here is similar in spirit to, but in a sense stronger than, the result of Lempert-Vivas in that we are able to prescribe the location of the singular locus (which need not consist of isolated points), as well as see exactly how the regularity fails;   we are not aware of any similar result in the theory of the HCMA in which this precise information about the weak solution is available, other than the toric case \cite{RubinsteinZelditchI,RubinsteinZelditchII}.    We remark also that in the work of \cite{RubinsteinZelditchIII} irregularity of some geodesics is proved, albeit for a rather different initial value problem.  \medskip 

What permits us to have such a good understanding of the singularities of $\Phi$ is the connection with the Hele-Shaw flow.    To define this, suppose $(X,\omega)$ is a one-dimensional K\"ahler manifold, which we will take to be either $\mathbb P^1$ with its Fubini-Study form $\omega_{FS}$, $\mathbb{C}$ with the Lebesgue form $dA$ or the open unit disc $\mathbb D\subset \mathbb C$ with the Poincar\'e form $\omega_P$.  In the first case we use the convention that $\mathbb P^1$ has area one so
$$ A:= \int_X \omega \in \{1,\infty\}.$$
 The complex plane and unit disc both have the origin as a distinguished point, and when $X=\mathbb P^1$ we fix a point that we denote by $0\in \mathbb P^1$.

Given any $\phi\in C^{\infty}(X)$ such that $\omega + dd^c\phi>\epsilon \omega$ for some $\epsilon>0$, the Hele-Shaw flow consists of an increasing collection of sets
$$ \Omega_t \subset X \text{ for } t\in (0,A)$$
such that $\Omega_t$ has area $t$ with respect to $\omega + dd^c\phi$.   It is defined by setting
$$ \Omega_t : = \{ z\in X : \psi_t(z)<\phi(z)\}$$ where
$$\psi_{t} := \sup\{\psi: \psi \text{ is $\omega$-psh and }\psi\le \phi \text{ and } \nu_{0}(\psi)\ge t\}.$$
Recall that a function $\psi:X\to \mathbb R\cup \{-\infty\}$ is $\omega$-psh if $\psi+u$ is plurisubharmonic whenever locally $dd^c u=\omega$, thus $\omega+dd^c\psi$ will be a positive current, and $$\nu_0(\psi)= \sup\{t : \psi\le t\ln |z|^2 + O(1) \text{ near } z=0\}$$ is the order of the logarithmic singularity (Lelong number) of $\psi$ at $0\in X$.  \medskip

What is proved in \cite{RW} is that when $X=\mathbb{P}^1$ this flow is intimately connected to the weak solution $\tilde{\Phi}$ to the Dirichlet problem for the complex HCMA on $\mathbb{P}^1\times \overline{\mathbb D}^{\times}$ with boundary data the pullback of $\phi$ to $X\times \partial \mathbb D$ and a certain prescribed singularity at $(0,0)$;  in fact $\psi_t$ is the Legendre transform of $\tilde{\Phi}$.  Moreover there is a simple way to transform between $\Phi$ and $\tilde{\Phi}$, and thus each contain the same information as the Hele-Shaw flow (we shall recall this in more detail in Section \ref{sec:HCMA}).\medskip

To state our first result more precisely we need to consider flows of sets that develop singularities in a particularly simple way.   Let $S$ be the union of finitely many points and non-intersecting smooth curve segments in $\mathbb{P}^1\setminus \{0\}$.

\begin{definition}\label{def:tangency}
 We say that the Hele-Shaw for $\omega + dd^c\phi$ \emph{develops tangency} along $S$ if there exists a $T\in(0,1)$ such that (1) $\Omega_t$ is smoothly bounded, simply connected and varies smoothly for $t<T$ and (2) $\Omega_T$ is simply connected and $\partial \Omega_T$ is the image of a smooth locally embedded curve intersecting itself tangentially precisely along $S$ (see Figure \ref{fig1}).
\end{definition}

\begin{figure}[htb]
	\centering
\scalebox{.7}{\input{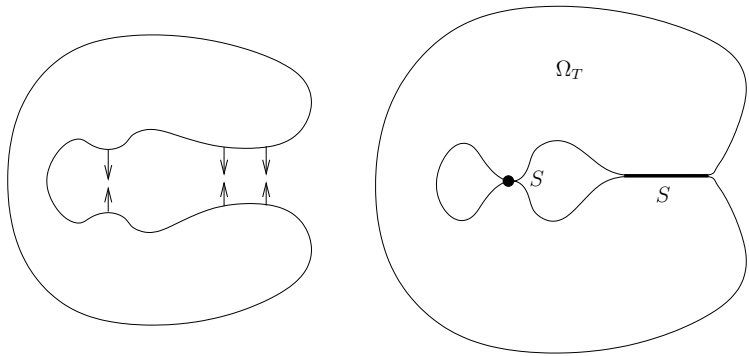}}
	\caption{Developing tangency along $S$}
	\label{fig1}
\end{figure}

\begin{customthm}{B}\label{thm:notc2}
Let $\phi\in C^{\infty}(\mathbb P^1)$ be a K\"ahler potential and suppose the Hele-Shaw flow for $\omega + dd^c\phi$ develops tangency along $S$.   Then the weak solution $\Phi$ from \eqref{eq:weakD} to the Dirichlet problem for the HCMA on $\mathbb P^1\times \overline{\mathbb D}$ with boundary data $(z,\tau)\mapsto \phi(\tau z)$ is not twice differentiable at the points $(\tau^{-1}z,\tau)$, $z\in S, |\tau|=1$.
\end{customthm}

We note that actually we know more, and from the discussion below it will be apparent that there is an explicit open set in $\mathbb P^1\times \overline{\mathbb D}$ on which $\Phi$ smooth.   With more work it may be possible to describe precisely where (and how) $\Phi$ fails to be twice differentiable,  but we shall not consider that further in this paper.

It remains to comment that to get Theorem \ref{thm:notc2:preliminary} from Theorem \ref{thm:notc2} we have to show that given such a set $S$ it is possible to find a K\"ahler potential whose Hele-Shaw flow develops tangency along $S$.  To do this we first choose  $\Omega_T$  as in Definition \ref{def:tangency}, and we aim to find a K\"ahler potential for which we can understand the Hele-Shaw flow backwards for a small time, say for $t\in [T-\epsilon,T]$.  As is well known in the Hele-Shaw literature it is not normally the case that the strong Hele-Shaw flow exists backwards in time starting at some $\Omega_T$ (for instance if $\omega$ is analytic then a necessary condition is that $\Omega_T$ has analytic boundary).  However, using a previous result of the authors \cite{RWDisc}, we shall see that this assumption is not necessary as long as one allows a (smooth) modification of the area form near $\Omega_T$ (said another way, we make a smooth modification of the permeability that governs the flow).   We may then shrink $\Omega_{T-\epsilon}$ down to 0, and expand $\Omega_T$ out to $\infty$,  so as to obtain a flow of sets $\{\Omega_t\}_{t\in (0,1)}$ with properties that ensure that it is the Hele-Shaw flow for some K\"ahler potential that can be constructed from the flow.  Details can be found in Section \ref{sec:designer}.

\subsection*{Families of Geodesic Rays} 

The weak solution $\tilde{\Phi}$ to the Dirichlet problem for the HCMA over the punctured disc $\overline{\mathbb D}^{\times}$ with $S^1$-invariant boundary data  is by definition a weak geodesic ray in the space of positive potentials on $X$.    When $X=\mathbb{P}^1$, if this solution is regular (by which we mean it is smooth and strictly $\omega$-plurisubharmonic along the fibres over $\overline{\mathbb D}^{\times}$) then it gives a genuine geodesic in this space, i.e.\ a smooth geodesic in the space of K\"ahler metrics.     For this reason regularity of the weak geodesic ray is of interest, and following \cite{RW} we know that this regularity is intimately related to the topology of the Hele-Shaw flow. By analogy, when $X=\mathbb{D}$ we call a regular weak geodesic ray a smooth geodesic ray.

To state our theorems in the simplest way, let let $B(t)$ denote the geodesic ball in $\mathbb{P}^1$ centred at $0$ with area $t$ taken with respect to the metric $\omega_{FS}$.

\begin{definition*}
Let $a\in \{0,1\}$.  We say that a collection of subsets $\{\Omega_t\}_{t\in (0,1)}$ of $\mathbb{P}^1$ is \emph{standard as $t$ tends to $a$} if there exist $\epsilon>0$ such that
$$ \Omega_t = B(t) \text{ for } |t-a|<\epsilon.$$
\end{definition*}

\begin{customthm}{C}\label{thm:main}
Let $X=\mathbb P^1$ or $ X=\mathbb D$ and suppose the Hele-Shaw flow $\{\Omega_t\}_{t\in (0,A)}$  for a K\"ahler form $\omega + dd^c\phi$ satisfies
\begin{enumerate}
\item $\{\Omega_t\}_{t\in (0,A)}$ is smoothly bounded and varies smoothly with non-vanishing normal velocity,
\item $\Omega_t$ is simply connected for all $t\in (0,A)$,
\item if $X=\mathbb P^1$ then $\{\Omega_t\}_{t\in (0,1)}$ is standard as $t$ tends to $1$.
\end{enumerate}
Then the weak geodesic ray obtained as the Legendre transform of the Hele-Shaw envelopes  $\{\psi_t\}$  is regular, and so defines a smooth geodesic ray in the space of K\"ahler metrics on $X$.
\end{customthm}

Of course, for this theorem to have any content we must be able to provide examples of potentials $\phi$ for which the Hele-Shaw has these properties.  An interesting case of this is given by a result of Hedenmalm-Shimorin (see also \cite{HedenmalmOlofsson} for the same statement with weaker curvature assumptions).

\begin{customthm}{HS}(Hedenmalm-Shimorin \cite{Hedenmalm})\label{thm:HS}
Let $(X,\omega)=(\mathbb D,\omega_P)$ and suppose that $\phi$ is taken so that the K\"ahler form $\omega_P + dd^c\phi$ is analytic, hyperbolic (i.e. the K\"ahler metric has negative curvature) and complete (e.g. if $\omega_P + dd^c\phi>\epsilon\omega_P$).   Then the Hele-Shaw flow $\{\Omega_t\}$ for $\omega+dd^c\phi$ is smoothly bounded, smoothly varying, and simply connected for all $t\in(0,\infty)$.
\end{customthm}

Another class of examples can be constructed from an observation due to Berndtsson (following a question of Zelditch) which says that \emph{any} reasonable smooth increasing family of simply connected domains is the Hele-Shaw flow for some smooth K\"ahler potential, see Theorem \ref{thm:existencepotential}.

Of course it is trivial to construct families of domains $\{\Omega_t\}$ that satisfy the hypotheses of Theorem \ref{thm:main},  and thus we have an easy way to construct explicit families of smooth geodesic rays in the space of K\"ahler metrics on $\mathbb P^1$ (resp.\ on $\mathbb D$).  In particular we have that any hyperbolic analytic K\"ahler metric with $\omega_P+dd^c\phi>\epsilon\omega_P$ on $\mathbb D$ is the starting point for some canonical smooth geodesic ray.




\subsection*{Acknowledgements } 
We wish to thank the Simons Center for Geometry and Physics for inviting the authors to the ``Large $N$ Program" in the Spring of 2015, in particular Steve Zelditch for his role in organising this program as well as his interest in our previous work which led directly to the material in this note.  We also thank the other participants, in particular Bo Berndtsson for his interest and assistance.

During this work JR was supported by an EPSRC Career Acceleration Fellowship (EP/J002062/1). DWN has received funding from the People Programme (Marie Curie Actions) of the European Union's Seventh Framework Programme (FP7/2007-2013) under REA grant agreement no 329070


\section{The Hele Shaw Flow}
\subsection{Definition and Preliminaries}
Suppose $(X,\omega)$ is a one-dimensional K\"ahler manifold, which we will take to be either $\mathbb P^1$ with its Fubini-Study form $\omega_{FS}$, $\mathbb{C}$ with the Lebesgue form $dA$ or the open unit disc $\mathbb D\subset \mathbb C$ with the Poincar\'e form $\omega_P$. Let $$A:=\int_X \omega \in (0,\infty]$$ and fix an origin $0\in X$.    We use the convention 
$$d^c = \frac{i}{2\pi}(\overline{\partial} - \partial)$$ so $dd^c \log |z|^2 = \delta_0$.  On $\mathbb P^1$ we always having in mind a chart $0\in \mathbb C\subset \mathbb P^1$ with coordinate $z$ so the Fubini-Study metric $\omega_{FS}$ has local potential $\log (1+|z|^2)$ on $\mathbb C$ giving $\mathbb P^1$ area 1.

Let $\phi\in C^{\infty}(X)$ be such that 
\begin{equation}\label{eq:vpositive}
\omega_{\phi} := \omega + dd^c\phi>\epsilon \omega \text{ for some }\epsilon>0.\end{equation}
In particular \eqref{eq:vpositive} implies $\omega_{\phi}$ is strictly positive (and in the compact case $X=\mathbb P^1$ condition \eqref{eq:vpositive} is equivalent to $\omega_{\phi}$ being strictly positive).

\begin{definition}
For $t\in (0,A)$ set
$$\psi_{t} := \sup\{\psi\colon X\to \mathbb R\cup \{-\infty\}: \psi \text{ is $\omega$-psh and }\psi\le \phi \text{ and } \nu_{0}(\psi)\ge t\}.$$
\end{definition}

Here $\nu_0$ denotes the Lelong number at $0$, so $\nu_0(\psi)\ge t$ means that $\psi(z)\le t\ln |z|^2 + O(1)$ near $0$.  As the upper semi-continuous regularisation of $\psi_t$ is itself a candidate for the envelope defining $\psi_t$, we see that $\psi_t$ is $\omega$-psh. Since the Lelong number is additive we get that for a fixed $z$ the functon $t\mapsto \psi_t(z)$ is concave in $t$.

\begin{definition}
For $t\in (0,A)$ set
\begin{equation}
\Omega_t : = \{ z\in X : \psi_t(z)<\phi(z)\}.\label{eq:defHS}
\end{equation} 
\end{definition}

It is easy to see that if $\phi$ is replaced by $\phi+h$ for some harmonic function $h$ then $\psi_t$ is replaced by $\psi_t+h$.  Thus $\Omega_t$ depends only on $\omega_{\phi}$.  

\begin{definition}(Hele-Shaw flow)\label{def:heleshawflow}
We refer to collection of sets $\{\Omega_t\}_{t\in (0,A)}$ as the \emph{Hele-Shaw flow} associated to $(X,\omega_{\phi})$ and the collection $\{\psi_t\}_{t\in (0,A)}$ as the \emph{Hele-Shaw envelopes} associated to $(X,\omega,\phi)$.
\end{definition}

\begin{remark}
What we have called the Hele-Shaw flow is often called the ``weak Hele-Shaw flow".  If $(a,b) \subset (0,A)$ we will also refer to the subcollection $\{\Omega_t\}_{t\in (a,b)}$ as the Hele-Shaw flow and similarly for the envelopes.
\end{remark}

\clearpage

\begin{proposition}\label{prop:basicHS}\ 
\begin{enumerate}
\item \label{item1} $\Omega_t$ is an open connected set containing the origin for all $t\in (0,A)$.
\item \label{item2} $\partial \Omega_t$ has measure zero.
\item \label{item3} $\psi_t$ is $C^{1,1}$ on $X\setminus\{0\}$. 
\item \label{item4} $$\omega_{\psi_t} = (1-\chi_{\Omega_t}) \omega_{\phi} +t\delta_0$$
in the sense of currents. Here $\chi_S$ denotes the characteristic function of a set $S$, and $\delta_0$ the Dirac delta.
\item \label{item5} For $t\in (0, A)$ we have
$$\int_{\Omega_t}\omega_{\phi}=t.$$  
\end{enumerate}
\end{proposition}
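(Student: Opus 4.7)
The plan is to handle the five items in an order that lets each build on the previous, using the envelope characterisation of $\psi_t$ together with standard techniques from pluripotential theory (comparison, balayage, and regularity for obstacle problems).

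For (1), openness of $\Omega_t = \{\psi_t<\phi\}$ follows at once from upper semi-continuity of $\psi_t$ and smoothness of $\phi$. The origin lies in $\Omega_t$ since the Lelong condition $\nu_0(\psi_t)\ge t>0$ forces $\psi_t(0)=-\infty$, while $\phi(0)$ is finite. For connectedness I argue by contradiction: if $U$ is a connected component of $\Omega_t$ not containing $0$, I consider the candidate
\[
\tilde\psi_t := \begin{cases}\phi & \text{on }U,\\ \psi_t & \text{on }X\setminus U.\end{cases}
\]
Since $\psi_t=\phi$ on $\partial U$ (as $U$ is a component of $\{\psi_t<\phi\}$), $\tilde\psi_t$ is usc, its Lelong number at $0$ agrees with that of $\psi_t$, and a gluing argument shows $\omega+dd^c\tilde\psi_t\ge 0$. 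But $\tilde\psi_t>\psi_t$ on $U$, contradicting the sup defining $\psi_t$.

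Item (3) is the heart of the matter: the $C^{1,1}$-regularity of $\psi_t$ on $X\setminus\{0\}$. I would invoke the regularity theory for envelopes of $\omega$-psh functions with a prescribed logarithmic singularity; this rests on obstacle-problem techniques (barrier constructions, Kiselman-type minimum principles) developed by Berman and used by the authors in \cite{RW}. Once (3) is in hand, (4) splits into three assertions: on $X\setminus\overline{\Omega_t}$ one has $\psi_t\equiv \phi$ and so $\omega_{\psi_t}=\omega_\phi$ directly; on $\Omega_t\setminus\{0\}$ a standard perturbation argument (any positive bump in $\omega+dd^c\psi_t$ could be absorbed into an admissible competitor strictly larger than $\psi_t$) forces $\omega+dd^c\psi_t=0$; and at $0$ the mass of $\omega_{\psi_t}$ equals $t$ exactly, with the lower bound from $\nu_0(\psi_t)\ge t$ and the upper bound from comparison with the explicit majorant $t\log|z|^2+C$. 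Combined with (2) below, these pieces assemble into the stated formula.

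For (2), I would use (3) together with the current equation in (4): on $X\setminus\{0\}$ the measure $\omega_{\psi_t}$ has an $L^\infty$ density (since $\psi_t\in C^{1,1}$), and (4) shows this density equals $(1-\chi_{\Omega_t})$ times the density of $\omega_\phi$; comparing the two sides on $\partial\Omega_t$ forces $\omega_\phi$-measure zero, hence zero measure with respect to Lebesgue. Finally, (5) follows by integrating (4). Since $\psi_t=\phi$ off $\Omega_t$, the difference $\psi_t-\phi$ is supported in $\overline{\Omega_t}$, which is relatively compact (obvious for $X=\mathbb P^1$, and for $X=\mathbb D$ because $\omega_\phi$ blows up near $\partial\mathbb D$ so a set of finite $\omega_\phi$-area lies compactly inside). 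Stokes then gives $\int_X dd^c(\psi_t-\phi)=0$, i.e.\ $\int_X\omega_{\psi_t}=\int_X\omega_\phi$, and combined with (4) this yields $\int_{\Omega_t}\omega_\phi=t$. The main obstacle in the whole proof is (3): the other four items are essentially formal consequences of the definition once the $C^{1,1}$ regularity is secured, but that regularity requires genuinely nontrivial pluripotential input.
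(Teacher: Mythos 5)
The paper does not give a self-contained proof: it cites \cite[Proposition 1.1]{RW} and notes that uniformisation reduces a general one-dimensional $X$ to $\mathbb P^1$, $\mathbb D$ or $\mathbb C$. Your proposal therefore takes a genuinely different, self-contained route; items (1), (3) and the Lelong-number computation in (4)/(5) are in the right spirit (for (5) on $\mathbb D$ one should be a little careful, since the relative compactness of $\Omega_t$ that you invoke is close to the conclusion itself, but that is fixable).

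The real problem is your treatment of (2) and (4), which as written is circular: you say the three pieces of (4) ``combined with (2) below'' assemble into the formula, and you then prove (2) by invoking ``the current equation in (4).'' In fact (4) does not need (2): since $1-\chi_{\Omega_t}=\chi_{\Omega_t^c}$, the a.e.\ agreement of second derivatives of two $C^{1,1}$ functions on the set where they coincide (an Alexandrov-type fact) gives $\omega_{\psi_t}=\omega_\phi$ a.e.\ on the \emph{whole} contact set $\Omega_t^c$, including $\partial\Omega_t$, and together with the bump argument on $\Omega_t$ and the Lelong number at $0$ this yields (4) with no reference to (2). But then (2) does \emph{not} follow from (4) by ``comparing the two sides on $\partial\Omega_t$'': an $L^\infty$ density is entirely free to equal $0$ on $\Omega_t$ and the density of $\omega_\phi$ on $\Omega_t^c$ with the two sets interlaced along an interface of positive Lebesgue measure, so there is nothing to compare. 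That the free boundary $\partial\Omega_t$ is Lebesgue-null is genuinely nontrivial and uses the strict positivity of $\omega_\phi$; it is a theorem of Caffarelli (a density estimate for the coincidence set of the obstacle problem), appearing in the Hele-Shaw literature via Sakai's quadrature-domain theory. Without invoking a result of that type your argument for (2) does not close.
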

\begin{proof}
This is standard material for the Hele-Shaw flow, and the details are given in \cite[Proposition 1.1]{RW} (the cited reference is for $X=\mathbb P^1$, but the same proof applies for $\mathbb D$ or $\mathbb C$).
\end{proof}

Our next Lemma says that the Hele-Shaw flow is local, by which we mean $\Omega_t$ depends only one the restriction of $\omega_{\phi}$ to a neighbourhood of $\overline{\Omega_t}$.  

\begin{lemma}[Locality of the Hele-Shaw Flow]\label{lem:locality}
Let $U\subset X$ be an open subset containing $0$ and $\phi$ and $\tilde{\phi}$ two K\"ahler potentials such that $\omega_{\phi}=\omega_{\tilde{\phi}}$ on $U$. Let $\Omega_t$ and $\tilde{\Omega}_t$ denote their respective Hele-Shaw flows. Then for all $t$ such that $\Omega_t$ is relatively compact in $U$ we have that 
$$\Omega_t = \tilde{\Omega}_t.$$
\end{lemma}
\begin{proof}
If $\Omega_t$ is relatively compact in $U$ it follows that $${\psi_t}_{|U}=\sup\{\psi\leq \phi: \psi \text{ is $\omega$-psh on $U$ and } \nu_0(\psi)\geq t\}.$$ We also have that $\phi-\tilde{\phi}$ is harmonic on $U$, from which it follows that $${(\psi_t-\phi+\tilde{\phi})}_{|U}=\sup\{\psi\leq \tilde{\phi}: \psi \text{ is $\omega$-psh on $U$ and } \nu_0(\psi)\geq t\}$$ and hence 
$$(\psi_t-\phi+\tilde{\phi})_{|U}\geq (\tilde{\psi}_t)_{|U}.$$ This in turn shows that $$\tilde{\Omega}_t \supseteq \Omega_t.$$ But now 

$$t=\int_{\Omega_t}\omega_{\phi}=\int_{\Omega_t}\omega_{\tilde{\phi}}\leq \int_{\tilde{\Omega}_t}\omega_{\tilde{\phi}}=t,$$ hence $$\Omega_t=\tilde{\Omega}_t.$$

\end{proof}


\subsection{The Strong Hele-Shaw flow}

We shall also need the notion of a strong solution to the Hele-Shaw flow.  We shall only consider this in the plane, so suppose $\{\Omega_t\}_{t\in (a,b)}$ is a smooth increasing family of domains of $\mathbb C$.  By this we mean each $\Omega_t$ is smoothly bounded and varies smoothly, so locally $\partial \Omega_t$ is the graph of a smooth function that varies smoothly with $t$.    So if $n$ denotes the outward unit normal vector field on $\partial \Omega_{t_0}$ for some $t_0$, then for $t$ close to $t_0$ we can write $\partial \Omega_t = \{ x + f(x,t) n_x : x\in \partial \Omega_{t_0}\}$ for some smooth function $f_t(x) = f(x,t)$ on $\partial \Omega_{t_0}$ that is positive for $t>t_0$ and negative for $t<t_0$.   Then the \emph{normal velocity} of $\partial \Omega_{t_0}$ is defined to be
$$ V_{t_0} := \frac{df_t}{dt}\big\vert_{t=0} n.$$

Now assume also each $\Omega_t$ contains the origin.    For each $t$ let $$p_t(z):=-G_{\Omega_t}(z,0)$$ where $G_{\Omega_t}$ denotes the Green's function for $\Omega_t$ with logarithmic singularity at the origin. Thus 
$$ p_t = 0 \text{ on } \partial \Omega_t \text{ and } \Delta p_t =-\delta_0.$$
The statement that $p_t$ exists and is smooth on $\overline{\Omega}_t\setminus \{0\}$ is classical (this follows immediately from regularity of the Dirichlet problem for the Laplacian, e.g.\ \cite[Proposition 1.3.11]{Krantz}), which can be found, for instance, in \cite[Chapter 6]{Gilbarg}).     We also fix a smooth area form $\eta$ on $\mathbb C$ which we write as 
$$\eta = \frac{1}{\kappa} dA$$
where $dA$ is the Lebesgue measure and $\kappa$ is a strictly positive real-valued smooth function on $\mathbb C$.

\begin{definition}(Strong Hele-Shaw flow)
We say that $\{\Omega_t\}_{t\in (a,b)}$ is the \emph{strong Hele-Shaw flow} if
\begin{equation}\label{eq:HSclassical:again} 
 V_t = -\kappa \nabla p_t \text{ on } \partial \Omega_t \text{ for } t\in (a,b)
\end{equation}
where $V_t$ is the normal velocity of $\partial \Omega_t$.  When necessary to emphasise the dependence on the area form we refer to this as the strong Hele-Shaw flow with respect to $\eta$ (or with respect to $\kappa$).
\end{definition}

\begin{remark}
 The strong Hele-Shaw flow has an interpretation as the flow of a fluid moving between two plates in a medium which has a permeability encoded by the function $\kappa$, under injection of fluid at the origin (see \cite{RWDisc} for a discussion, and also \cite{Gustafsson} for a comprehensive account of the subject which for the most part considers the case where $\kappa\equiv 1$).   
\end{remark}

We shall now prove that a strong Hele-Shaw flow of simply connected domains is also the Hele-Shaw flow defined using envelopes, as  in Definition \ref{def:heleshawflow}.  To do this we start with the following (slight generalization) of a classical statement due Richardson \cite{Richardson} saying that for the Hele-Shaw flow, the complex moments 
$$M_k(t):= \int_{\Omega_t} z^k \frac{dA}{\kappa} \text{ for } k\in \mathbb Z_{\ge 1}$$
remain constant in $t$.

\begin{lemma}\label{lem:richardson}
Suppose that $\{\Omega_t\}_{t\in (a,b)}$ is a smooth family of strictly increasing simply connected domains in $\mathbb C$ containing the origin that satisfies
\begin{equation}\label{eq:HSclassical:repeat} 
 V_t = -\kappa \nabla p_t \text{ on } \partial \Omega_t.
\end{equation}
Then for any integrable subharmonic function $h$ defined on some neighbourhood of $\overline{\Omega}_t,$ and $t_0<t$ we have
$$  \int_{\Omega_t\setminus \Omega_{t_0}} h \frac{dA}{\kappa}  \ge (t-t_0)  h(0).$$ 
\end{lemma}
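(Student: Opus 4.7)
The plan is to show the monotone quantity
$$F(t) := \int_{\Omega_t} h \, \frac{dA}{\kappa}$$
satisfies $F'(t) \ge h(0)$ for all $t \in (a,b)$, and then integrate from $t_0$ to $t$.

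\textbf{Step 1 (differentiate $F$).} Since $\{\Omega_t\}$ varies smoothly and $1/\kappa$ is smooth, the Reynolds transport theorem yields
$$F'(t) = \int_{\partial \Omega_t} h \, \frac{v_t}{\kappa} \, ds,$$
where $v_t$ is the scalar normal velocity of $\partial \Omega_t$. Because $p_t \equiv 0$ on $\partial \Omega_t$, the gradient $\nabla p_t$ is purely normal to the boundary, so \eqref{eq:HSclassical:repeat} gives $v_t = -\kappa \, \partial_n p_t$. Substituting,
$$F'(t) = -\int_{\partial \Omega_t} h \, \partial_n p_t \, ds.$$

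\textbf{Step 2 (Green's identity).} Applying Green's second identity to the pair $(h, p_t)$ on $\Omega_t \setminus B_\varepsilon(0)$ and sending $\varepsilon \to 0$ (to absorb the logarithmic singularity of $p_t$ at $0$), and using $p_t|_{\partial\Omega_t} = 0$ together with $\Delta p_t = -\delta_0$, I get
$$\int_{\partial \Omega_t} h \, \partial_n p_t \, ds \;=\; \int_{\Omega_t} h \, \Delta p_t \, dA \;-\; \int_{\Omega_t} p_t \, \Delta h \, dA \;=\; -h(0) \;-\; \int_{\Omega_t} p_t \, \Delta h \, dA.$$
The condition $-\Delta p_t = \delta_0 \ge 0$ makes $p_t$ superharmonic with zero boundary values, so the minimum principle forces $p_t \ge 0$ on $\Omega_t$. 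Combined with $\Delta h \ge 0$ (as a positive measure, since $h$ is subharmonic), the final integral is non-negative, giving
$$F'(t) \;=\; h(0) + \int_{\Omega_t} p_t \, \Delta h \, dA \;\ge\; h(0).$$

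\textbf{Step 3 (integrate).} Integrating this over $[t_0, t]$ and using $\Omega_{t_0} \subset \Omega_t$,
$$\int_{\Omega_t \setminus \Omega_{t_0}} h \, \frac{dA}{\kappa} \;=\; F(t) - F(t_0) \;\ge\; (t - t_0)\, h(0).$$

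The main obstacle is justifying these manipulations when $h$ is only integrable and subharmonic, not smooth up to $\partial \Omega_t$. I would handle this by a standard mollification argument: since $\overline{\Omega}_{t_0} \Subset \Omega_t$ by strict monotonicity, pick a slightly larger $t' > t$ with $\overline{\Omega}_t \Subset \Omega_{t'}$ and work on the subflow $\{\Omega_s\}_{s \in (t_0 - \eta, t')}$; on this region replace $h$ by $h_\varepsilon := h \ast \rho_\varepsilon$, which is smooth, subharmonic, and decreases pointwise to $h$ as $\varepsilon \to 0$. The argument of Steps 1--2 applies verbatim to each $h_\varepsilon$, and the inequality survives passing to the limit by monotone convergence on both sides.
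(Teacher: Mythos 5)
Your proof is correct and follows essentially the same route as the paper's: differentiate $F(t)=\int_{\Omega_t} h\,\kappa^{-1}\,dA$, rewrite the resulting boundary flux via Green's identity, and conclude $F'(t)\ge h(0)$ from $p_t\ge 0$, $\Delta h\ge 0$, and $\Delta p_t=-\delta_0$. Your closing mollification remark handles a regularity point that the paper leaves implicit, but the core argument is identical.
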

\begin{proof}
By the Reynolds' transport theorem and then integration by parts, one computes
\begin{align*}
  \frac{d}{dt} \int_{\Omega_t}h \frac{1}{\kappa} dA &= \int_{\partial \Omega_t} h \frac{V_t}{\kappa} ds = -\int_{\partial \Omega_t} h \frac{\partial p_t}{\partial n} ds \\
&= \int_{\Omega_t} \left(p_t \Delta(h) - h(\Delta p_t)\right) dA - \int_{\partial \Omega_t} p_t \frac{\partial h}{\partial n} ds\ge h(0)
\end{align*}
since $\Delta h\ge 0$ and $p_t=0$ on $\partial \Omega_t$ and $\Delta p_t = -\delta_0$, 
\end{proof}

\begin{corollary}
With the assumption of the above lemma, suppose that $a=0$ and $\Omega_t$ tends to $\{0\}$ as $t\to 0$, that is given any neighbourhood $U$ of the origin $\Omega_t\subset U$ for $t$ sufficiently small.  Then for any integrable subharmonic function $h$ on $\Omega_t,$  we have
$$  \int_{\Omega_t} h \frac{dA}{\kappa}  \ge t  h(0).$$ 
and moreover equality holds if $h$ is holomorphic.
\end{corollary}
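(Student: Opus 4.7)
The plan is to derive the corollary by letting $t_0 \to 0$ in the inequality of Lemma \ref{lem:richardson}, and then exploit the fact that a holomorphic function has real and imaginary parts which are \emph{both} subharmonic and \emph{super}harmonic.

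First, fix $t \in (0,b)$ and let $h$ be an integrable subharmonic function on $\Omega_t$. For any $t_0 \in (0,t)$, Lemma \ref{lem:richardson} gives
\[
\int_{\Omega_t} h \,\frac{dA}{\kappa} - \int_{\Omega_{t_0}} h \,\frac{dA}{\kappa} \;\ge\; (t-t_0)\, h(0).
\]
The first step is to argue that the second integral tends to $0$ as $t_0 \to 0$. Since $\kappa$ is smooth and strictly positive on $\mathbb C$, it is bounded above and below on the compact set $\overline{\Omega_t}$, so $h/\kappa$ is integrable on $\Omega_t$ with respect to $dA$. Because the $\Omega_{t_0}$ shrink into every neighbourhood of the origin as $t_0 \to 0$, their Lebesgue measures tend to $0$ (alternatively, this follows from property \eqref{item5} in Proposition \ref{prop:basicHS} applied to a comparison flow, or directly from the fact that $\bigcap_{t_0 > 0} \Omega_{t_0} = \{0\}$ has measure zero). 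Absolute continuity of the Lebesgue integral then yields $\int_{\Omega_{t_0}} h \, dA/\kappa \to 0$. Passing to the limit in the displayed inequality gives the desired bound
\[
\int_{\Omega_t} h \,\frac{dA}{\kappa} \;\ge\; t\, h(0).
\]

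For the equality statement, suppose $h$ is holomorphic on $\Omega_t$. Write $h = u + iv$ with $u, v$ real-valued and harmonic; each of $u$, $-u$, $v$, $-v$ is then harmonic, and in particular integrable and subharmonic on $\Omega_t$ (integrability of $h$ in the holomorphic case is automatic on the relatively compact domain after slight shrinking, or may be assumed). Applying the already established inequality to $u$ and to $-u$ yields
\[
\int_{\Omega_t} u \,\frac{dA}{\kappa} \ge t\, u(0) \quad\text{and}\quad \int_{\Omega_t} (-u) \,\frac{dA}{\kappa} \ge -t\, u(0),
\]
so the two inequalities force equality for $u$. The same argument gives equality for $v$, and combining yields $\int_{\Omega_t} h \, dA/\kappa = t\, h(0)$.

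The only delicate point is the vanishing of $\int_{\Omega_{t_0}} h \, dA/\kappa$ as $t_0 \to 0$, which requires genuine integrability of $h$ (subharmonic functions may have $-\infty$ singularities) together with the fact that the family $\{\Omega_t\}$ is continuous at $t=0$ in the sense that its members collapse to the point $\{0\}$; beyond that the proof is a direct limiting argument from Richardson's inequality combined with the standard trick of applying a one-sided bound to both a harmonic function and its negative.
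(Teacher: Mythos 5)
Your proof follows exactly the same route as the paper's: passing to the limit $t_0\to 0$ in Lemma~\ref{lem:richardson}, and then observing that for holomorphic $h$ both $h$ and $-h$ are (after splitting into real and imaginary parts) subharmonic so the one-sided bound becomes an equality. You spell out the vanishing of $\int_{\Omega_{t_0}} h\,dA/\kappa$ and the real/imaginary decomposition in more detail than the paper does, but the substance is identical.
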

\begin{proof}
Taking the limit as $t_0\to 0$ in the above Lemma gives the first statement.  The second follows as if $h$ is holomorphic then $h$ and $-h$ are subharmonic.
\end{proof}

\begin{remark}In particular taking $h(z) = z^k$ we deduce that the complex moments of a Hele-Shaw flow $\{\Omega_t\}$ tending to $\{0\}$ as $t$ tends to zero are
$$ M_k(t) = \int_{\Omega_t} 
z^k \frac{dA}{\kappa} = 0 \text{ for all } t>0.$$
\end{remark}

We apply this in the next statement to prove that the strong Hele-Shaw flow is also a weak one (with respect to a suitable potential).

\begin{proposition}(Gustafsson)\label{prop:gust}
Suppose that $\{\Omega_t\}_{t\in (0,b)}$ is a smooth family of strictly increasing simply connected domains that is the strong Hele-Shaw flow with respect to $\kappa$, and assume $\{\Omega_t\}_{t\in (0,b)}$ tends to $\{0\}$ as $t\to 0$.   Set
$$ \phi(z) = \int_{\mathbb C} \log |z-\zeta|^2 \frac{dA_\zeta}{\kappa(\zeta)} - \log(1 + |z|^2) \text{ for } z\in \mathbb C.$$
Then $\{\Omega_t\}_{t\in (0,b)}$ is the Hele-Shaw flow with respect to $\omega_\phi : = dd^c ( \log( 1+ |z|^2) + \phi) $.
\end{proposition}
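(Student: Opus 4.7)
The plan is to exhibit the envelope $\psi_t$ explicitly as $\phi$ minus a suitable non-negative potential $w_t$ supported in $\overline{\Omega_t}$, thereby identifying $\{\psi_t < \phi\}$ with $\Omega_t$. The starting observation is that, with $\phi$ as defined,
$$\omega_\phi = dd^c\!\int_{\mathbb{C}} \log|z-\zeta|^2\, \frac{dA_\zeta}{\kappa(\zeta)} = \frac{dA}{\kappa}\quad\text{on }\mathbb{C},$$
so the weight appearing in the strong Hele-Shaw equation is exactly the one induced by $\omega_\phi$.

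Set
$$V_t(z) := \int_{\Omega_t}\log|z-\zeta|^2\, \frac{dA_\zeta}{\kappa(\zeta)}, \qquad w_t(z) := V_t(z) - t\log|z|^2,$$
and take the candidate $\tilde\psi_t := \phi - w_t$. The Corollary to Lemma 2.7, applied with the subharmonic test function $\zeta \mapsto \log|z-\zeta|^2$, yields $V_t(z) \geq t\log|z|^2$, hence $w_t \geq 0$; applied to the holomorphic test function $\zeta \mapsto \log(z-\zeta)$ on the simply connected $\Omega_t$ (for $z$ outside $\overline{\Omega_t}$) and taking real parts, it yields equality, so $w_t \equiv 0$ on $\mathbb{C}\setminus\overline{\Omega_t}$. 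Because $V_t$ is the Newtonian potential of a bounded compactly supported density it is $C^{1,\alpha}$ on $\mathbb{C}$, and combined with $w_t \equiv 0$ on the open complement of $\overline{\Omega_t}$ this forces $\nabla w_t \equiv 0$ along $\partial \Omega_t$. This $C^1$ gluing across $\partial\Omega_t$ is the most delicate point of the argument: without it, the distributional $dd^c w_t$ would pick up a spurious singular measure on $\partial\Omega_t$ and ruin plurisubharmonicity, and it is precisely the strong Hele-Shaw condition $V_t = -\kappa\nabla p_t$ that, through Richardson's Lemma, supplies the quadrature identity responsible for the vanishing normal derivative.

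With that smoothness in hand, the distributional identity $dd^c w_t = \chi_{\Omega_t}\omega_\phi - t\delta_0$ holds globally, so
$$\omega + dd^c\tilde\psi_t = (1-\chi_{\Omega_t})\omega_\phi + t\delta_0 \geq 0.$$
Combined with $\tilde\psi_t \leq \phi$, upper semi-continuity, and the expansion $\tilde\psi_t(z) = t\log|z|^2 + O(1)$ near $0$ (giving Lelong number $t$), $\tilde\psi_t$ is an admissible competitor for the envelope defining $\psi_t$, hence $\tilde\psi_t \leq \psi_t$.

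For the reverse inequality, let $\psi$ be any admissible competitor. Outside $\Omega_t$ one has $\psi \leq \phi = \tilde\psi_t$. On $\Omega_t\setminus\{0\}$, the difference $u := \psi - \tilde\psi_t$ is subharmonic because $\omega + dd^c\tilde\psi_t = 0$ there while $\omega + dd^c\psi \geq 0$; the matching Lelong number bounds $\nu_0(\psi) \geq t = \nu_0(\tilde\psi_t)$ keep $u$ bounded above near $0$, so it extends subharmonically across the puncture. Since $u \leq 0$ on $\partial\Omega_t$, the maximum principle delivers $u \leq 0$ on all of $\Omega_t$, and therefore $\psi_t = \tilde\psi_t$. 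It follows that $\{\psi_t < \phi\} = \{w_t > 0\}$. The inclusion $\{w_t > 0\} \subseteq \Omega_t$ is immediate from $w_t \equiv 0$ outside $\overline{\Omega_t}$; for the reverse, one shows $w_t > 0$ throughout $\Omega_t\setminus\{0\}$, e.g.\ by computing via Green's identity that $\tfrac{d}{ds}w_s(z)$ is a positive multiple of $p_s(z)$ for $s$ past the entry time of $z$ (or by using the strict positivity $dd^c w_t = \omega_\phi > 0$ to rule out interior zeros). This completes the identification of the strong flow with the Hele-Shaw flow.
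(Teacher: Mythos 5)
Your proposal follows essentially the same route as the paper: construct the explicit candidate $\tilde\psi_t = \phi - w_t$ (equivalently $\tilde\psi_t(z) = \int_{\Omega_t^c}\log|z-\zeta|^2\,\frac{dA_\zeta}{\kappa} - \log(1+|z|^2) + t\log|z|^2$), use Richardson's lemma via the Corollary to get $w_t \ge 0$ with equality off $\Omega_t$, conclude $\tilde\psi_t$ is a competitor, and then run a maximum-principle argument to get the reverse inequality. The core is correct, and your competitor/maximum-principle step is in fact a little cleaner than the paper's phrasing.

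However, you have a logical misattribution that is worth correcting. You single out the ``$C^1$ gluing across $\partial\Omega_t$'' as the delicate point and claim the strong Hele-Shaw condition is what prevents $dd^c w_t$ from acquiring a singular measure on $\partial\Omega_t$. This has the dependencies backwards. The function $V_t$ is the logarithmic potential of the bounded, compactly supported density $\chi_{\Omega_t}/\kappa$, so $V_t \in W^{2,p}_{loc}$ for every $p<\infty$ (hence $C^{1,\alpha}$) and $dd^c V_t = \chi_{\Omega_t}\,\omega_\phi$ hold as unconditional facts of potential theory, with no input from the flow and no phantom charge on $\partial\Omega_t$. Thus $dd^c w_t = \chi_{\Omega_t}\omega_\phi - t\delta_0$, and hence $\omega + dd^c\tilde\psi_t = (1-\chi_{\Omega_t})\omega_\phi + t\delta_0 \ge 0$, require nothing about $\kappa$ or the flow. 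What the strong Hele-Shaw hypothesis does supply, via Richardson and its Corollary, are the two quadrature facts you also use: $w_t \ge 0$ everywhere, and $w_t\equiv 0$ outside $\Omega_t$. The vanishing of $\nabla w_t$ on $\partial\Omega_t$ is then an automatic consequence of ``$C^1$, nonnegative, zero on the boundary,'' not an ingredient you must secure in order to compute $dd^c w_t$. Finally, your parenthetical alternative for showing $w_t>0$ on $\Omega_t\setminus\{0\}$ --- ``use the strict positivity $dd^c w_t = \omega_\phi>0$ to rule out interior zeros'' --- does not work as stated: strict subharmonicity forbids interior maxima of $w_t$, not interior minima, and an interior zero of a nonnegative function is a minimum. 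Your primary argument via Green's identity, showing $\tfrac{d}{ds}w_s(z)$ is a positive multiple of $p_s(z)$ once $z$ has entered the flow, is the correct one (and incidentally also what underlies the paper's appeal to Proposition~\ref{prop:basicHS}).
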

\begin{proof}
For the proof we shall write $\Omega_t^w:= \{ z\in X : \psi_t(z)< \phi(z)\}$ for the Hele-Shaw flow with respect to $\omega_{\phi}$, so the goal is to prove $\Omega^w_t = \Omega_t$.   
Define
$$ \tilde{\psi_t}(z) := \int_{\Omega_t^c}  \log |z-\zeta|^2 \frac{dA_\zeta}{\kappa(\zeta)} - \log(1 + |z|^2) + t \ln |z|^2.$$
Then by construction $\omega_{\tilde{\psi}_t}\ge 0$ and $\nu_0(\tilde{\psi}_t) = t$.   As $h(\zeta):= \log |z-\zeta|^2$ is subharmonic and integrable, we get from the previous Corollary that for all $z\in \mathbb C$
\begin{equation}
\phi(z) - \tilde{\psi}_t(z)= \int_{\Omega_t}  \log |z-\zeta|^2 \frac{dA_\zeta}{\kappa(\zeta)}  - t \ln |z|^2\ge 0.\label{eq:richapplication}
\end{equation}
Hence $\tilde{\psi_t}\le \phi$ making it a candidate for the envelope defining $\psi_t$, and hence $\tilde{\psi}_t\le \psi_t$.  In fact more is true, and if $z\notin \Omega_t$ then $h$ is holomorphic on $\Omega_t$ so equality holds in \eqref{eq:richapplication}, and hence
$$ \tilde{\psi}_t = \psi_t = \phi \text{ on } \Omega_t^c.$$
Now $\tilde{\psi_t} + \log (1 + |z|^2)$  and $\psi_t + \log( 1+ |z|^2)$ are both harmonic on $\Omega_t\setminus\{0\}$ (by Proposition \ref{prop:basicHS}(4)) with Lelong number one at $0$.  Hence by the maximum principle $\tilde{\psi}_t = \psi_t$ on $\Omega_t\setminus \{0\}$ as well.  Thus we conclude $\Omega_t^w = \Omega_t$ as desired.  \end{proof}

\begin{remark}\label{rmk:strongconverse}
Although we will shall not really need it, we remark that there is a converse to this, which says that if the Hele-Shaw domain $\Omega_T$ (with respect to $\omega+dd^c\phi$) is a smoothly bounded Jordan domain for some $T\in (0,V)$ then there is an $\epsilon>0$ such that $\{\Omega_t\}_{t\in(T-\epsilon,T+\epsilon)}$ is actually the strong Hele-Shaw flow.   Thus the hypothesis that $\{\Omega_t\}$ varies smoothly in Theorem \ref{thm:main} (as well as in Definition \ref{def:tangency}) is redundant.     The proof of this statement follows easily from the work in \cite{RWDisc}; specifically from \cite[Remark 3.12]{RWDisc} the Hele-Shaw domains $\Omega_t$ all lift to holomorphic curves $\Sigma_T$ in $\mathbb C\times \mathbb P^1$ with boundary contained in the submanifold given as the graph of $\frac{\partial \phi}{\partial z}$.  The hypothesis on $\Omega_T$ imply that $\Sigma_T$ is a holomorphic disc, at which point we can run the proof of \cite[Theorem 2.2]{RWDisc}.
\end{remark}

Finally, we state two previous results of the authors that give existence results for the strong Hele-Shaw flow.  The first says this flow always exists for small time.  

\begin{theorem} \label{thm:RWNold0}\cite[Theorem 2.1]{RWDisc}
The Hele-Shaw flow for any K\"ahler form $\omega+dd^c\phi$ is the strong Hele-Shaw flow for short time $t\in(0,\epsilon),$ $\epsilon>0,$ and in this range is diffeomorphic to the standard flow $B(t)$.
\end{theorem}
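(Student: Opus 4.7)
The plan is to (i) localise near $0$, (ii) construct the strong Hele-Shaw flow from a point source, and (iii) identify it with the weak flow via Proposition \ref{prop:gust}.

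First, localisation. Pick a small coordinate disc $U$ about $0$ on which $\omega_\phi = \frac{1}{\kappa_0}dA$ for some smooth strictly positive $\kappa_0$. A barrier argument using the envelope definition of $\psi_t$---construct for each $p \in X\setminus U$ an $\omega$-psh candidate with Lelong number at $0$ bounded below by some $t_p>0$ and equal to $\phi$ at $p$, forcing $p\notin \Omega_{t}$ for $t<t_p$---combined with compactness of $X\setminus U$ and monotonicity of $\{\Omega_t\}$ in $t$, shows $\Omega_t \Subset U$ for $t$ in some initial interval $(0,\epsilon_0)$.

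Second, the strong flow. Extend $\kappa_0$ to a smooth positive $\kappa$ on $\mathbb C$ and seek a smooth short-time family $\{\Omega_t'\}_{t\in(0,\epsilon)}$ of strictly increasing simply connected smoothly bounded domains containing $0$, shrinking to $\{0\}$ as $t\to 0$, and satisfying $V_t = -\kappa \nabla p_t$ on $\partial \Omega_t'$. Uniformise $\Omega_t' = f_t(\mathbb D)$ with $f_t(0)=0$ and $f_t'(0)>0$; the strong Hele-Shaw condition then translates into a Polubarinova-Galin type evolution equation for $f_t$ with coefficient depending on $\kappa$. The equation is singular at $t=0$, but the substitution $g_t(\zeta) = f_t(\zeta)/\sqrt t$ desingularises it, with natural initial condition $g_0(\zeta) = \sqrt{\kappa(0)/\pi}\,\zeta$ coming from the explicit round-disc solution in the constant-$\kappa$ case. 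A contraction-mapping or Cauchy-Kowalewski argument applied to the rescaled evolution then produces the required family $\{\Omega_t'\}$, with non-vanishing normal velocity by inspection of $g_t$.

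Third, identification and diffeomorphism. Proposition \ref{prop:gust} applied to $\{\Omega_t'\}$ produces an explicit $\tilde\phi$ on $\mathbb C$ for which $\{\Omega_t'\}$ is the weak Hele-Shaw flow of $\omega_{\tilde\phi}$. On $U$ both $\omega_\phi$ and $\omega_{\tilde\phi}$ equal $\frac{1}{\kappa_0}dA$, so $\phi-\tilde\phi$ is harmonic on $U$; invariance of the weak flow under harmonic corrections to the potential, together with Lemma \ref{lem:locality} applied using $\Omega_t\Subset U$ and $\Omega_t'\Subset U$, identify $\Omega_t' = \Omega_t$ for $t\in(0,\epsilon)$, proving the weak flow is the strong flow in this range. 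Finally, the asymptotic $f_t(\zeta) = \sqrt{t\kappa(0)/\pi}\,\zeta + O(t)$ uniformly on $\overline{\mathbb D}$ shows $\{\Omega_t\}_{t\in(0,\epsilon)}$ is a smooth deformation of a family of round discs, hence diffeomorphic to the standard family $\{B(t)\}$ (also a smooth deformation of round discs of the same area) after possibly further shrinking $\epsilon$.

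The main obstacle is the point-source existence in step (ii): the Polubarinova-Galin equation genuinely degenerates at $t=0$, as the conformal radius of $\Omega_t'$ vanishes, so one cannot cite off-the-shelf short-time PDE existence directly. Either the $\sqrt t$-rescaling must be carried out carefully and the rescaled evolution shown to be a well-posed analytic system on $[0,\epsilon)$, or one starts at a tiny smooth seed at some positive time $t_0$ (where classical smooth-initial-data existence is standard) and extends backwards to $t=0$ via a compactness argument controlled by the area estimate $\int_{\Omega_t'}\omega_{\tilde\phi}=t$. In either route the $\kappa$-dependence must be tracked with some care, as most of the Hele-Shaw literature treats $\kappa\equiv 1$.
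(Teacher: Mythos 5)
This theorem is not proved in the present paper; it is imported verbatim from \cite[Theorem 2.1]{RWDisc}, where the argument is set up through moduli of holomorphic discs (the Hele-Shaw domains are lifted to holomorphic discs in $\mathbb C\times\mathbb P^1$ with boundary on the totally real submanifold given by the graph of $\partial\phi/\partial z$, and short-time existence and regularity come from the Fredholm/implicit-function-theorem framework for such discs --- the same machinery alluded to in Remark \ref{rmk:strongconverse}). You instead propose the classical conformal-map route via a Polubarinova--Galin type evolution. That is a genuinely different strategy, and it is worth noting that your step (i) localisation and step (iii) identification via Proposition \ref{prop:gust} plus Lemma \ref{lem:locality} (together with invariance of the weak flow under harmonic shifts of the potential) are sound in outline.

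The gap is in step (ii), and it is more than a technicality that you defer. In the paper's normalisation the PG equation reads $\mathrm{Re}\bigl[\dot f_t(\zeta)\,\overline{\zeta f_t'(\zeta)}\bigr]=\frac{\kappa(f_t(\zeta))}{2\pi}$ for $|\zeta|=1$. Substituting $f_t=\sqrt t\,g_t$ gives
\[
t\,\mathrm{Re}\bigl[\dot g_t(\zeta)\,\overline{\zeta g_t'(\zeta)}\bigr] \;=\; \frac{\kappa(\sqrt t\,g_t(\zeta))}{2\pi}-\tfrac12\,\mathrm{Re}\bigl[g_t(\zeta)\,\overline{\zeta g_t'(\zeta)}\bigr],
\]
in which $\dot g_t$ still carries the factor $t$. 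So the rescaling does \emph{not} desingularise the equation; it transforms it to a Fuchsian-type degeneracy with an equilibrium at $g_0(\zeta)=\sqrt{\kappa(0)/\pi}\,\zeta$ (one can pass to $s=\log t$ and view it as an autonomous-in-the-limit system as $s\to-\infty$ near that equilibrium). Showing that this has a unique solution emanating from the equilibrium, depending smoothly on $\kappa$, is a take-off problem rather than a routine short-time existence statement, and neither a naive contraction-mapping argument (which would require the $\dot g_t$ term to be non-degenerate at $t=0$) nor Cauchy--Kowalewski (which would require $\kappa$ real-analytic, which is not assumed) applies. Your fallback --- starting from a small smooth seed at $t_0>0$ and extending backwards by compactness --- likewise does not obviously produce the flow that shrinks to a point with the required uniqueness; backward Hele-Shaw is ill-posed, and the area constraint $\int_{\Omega_t'}\omega_{\tilde\phi}=t$ alone does not pin down the limit. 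This is precisely the difficulty that the holomorphic-disc moduli argument of \cite{RWDisc} is designed to avoid: the degeneration at $t=0$ becomes a regular point of a Fredholm problem. As written, step (ii) is therefore a genuine gap rather than an omitted routine verification.
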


The second says that any simply connected bounded Jordan domain $\Omega$ is part of a strong Hele-Shaw flow, both backwards and forwards in time, as long as one allows a modification of the area form inside $\Omega$.

\begin{theorem} \label{thm:RWNold2}\cite[Theorem 2.2, Remark 7.1]{RWDisc}
Let $\Omega$ be a smoothly bounded Jordan domain in $\mathbb C$ containing the origin and let $\eta$ be a smooth area form defined in a neighbourhood of $\partial\Omega$. Then there exists a smooth area form $\eta'$ on a neighbourhood $U$ of $\partial \Omega$ such that $\eta=\eta'$ on $U\cap \Omega^c$ and so that $\Omega=\Omega_T$ is part of a strong Hele-Shaw flow $\{\Omega_t\}_{t\in(T-\epsilon,T+\epsilon)}$ with respect to $\eta'$.  
\end{theorem}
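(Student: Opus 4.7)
The plan is to prescribe the flow $\{\Omega_t\}$ freely near $t = T$ and then \emph{solve} the strong Hele-Shaw equation for $\kappa' = dA/\eta'$. The freedom is used on the inside of $\Omega$, where one has latitude to modify the area form; on the outside $\kappa'$ is forced to equal the given $\kappa$, so there $\{\Omega_t\}$ must be the forward strong Hele-Shaw flow of $\eta$ starting from $\Omega$.

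First, for $t \in [T, T+\epsilon_1)$ I would invoke the classical short-time existence of the strong Hele-Shaw flow with smooth Jordan initial datum $\Omega_T = \Omega$ and area form $\eta$, producing a smooth strictly increasing family of smoothly bounded Jordan domains with $V_t > 0$ and $V_t = -\kappa \nabla p_t$ on $\partial \Omega_t$. Writing $\partial \Omega_t = \{ x + d(x,t) n(x) : x \in \partial \Omega\}$ for the signed normal displacement $d$, I would then use a smooth extension theorem (Borel/Seeley type) to prolong the smooth function $d$ from $\partial\Omega \times [T, T+\epsilon_1)$ to $\partial\Omega \times (T - \epsilon, T + \epsilon_1)$, with $d(x, T) = 0$, $\partial_t d > 0$ everywhere, and $d(x, t) < 0$ for $t < T$. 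This gives a smooth, strictly increasing extension $\{\Omega_t\}_{t \in (T - \epsilon, T + \epsilon_1)}$ whose $\infty$-jet at $t = T$ agrees on the two sides.

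Next, I would read off $\kappa'$. For $t \leq T$ the strong Hele-Shaw equation forces $\kappa' = -V_t / \partial_n p_t$ on $\partial \Omega_t$, where $n$ is the outward normal. This is smooth and positive, since $V_t > 0$ by construction and $\partial_n p_t < 0$ by Hopf's boundary point lemma applied to $p_t$ (which is $0$ on $\partial \Omega_t$ and strictly positive inside, as $\Delta p_t = -\delta_0$ forces $p_t$ to be superharmonic with minimum on the boundary). Because $V_t > 0$, the boundaries $\{\partial \Omega_t\}_{t < T}$ foliate a one-sided collar $W$ of $\partial \Omega$ inside $\Omega$, giving a smooth positive function $\kappa'$ on $W$. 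On $\Omega^c$ I set $\kappa' := \kappa$.

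The main obstacle is checking that $\kappa'$ is smooth across $\partial \Omega$. Values match there because the forward flow satisfies $\kappa = -V_T / \partial_n p_T$. For higher-order matching, introduce coordinates $(x, s)$ near $\partial\Omega$ by $y = x + s n(x)$; the change of variables $s = d(x, t)$ has $\partial_t d(x, T) = V_T(x) > 0$ and is smooth and invertible in a neighbourhood of $s = 0$. On both sides of $\partial\Omega$ (with $s > 0$ outside, $s < 0$ inside) the function $\kappa'(y)$ is given by the same formula $-V_t / \partial_n p_t$ evaluated at $t = t(x, s)$. Since the $\infty$-jet in $t$ of $\{\Omega_t\}$ at $t = T$ was arranged to agree on the two sides, the $\infty$-jets of $V_t$ and $\partial_n p_t$ also agree, and hence the $\infty$-jet of $\kappa'$ in $s$ at $s = 0$ agrees from both sides. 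Therefore $\kappa'$ is smooth across $\partial \Omega$ and $\eta' := dA / \kappa'$, defined on a neighbourhood of $\overline{\Omega^c}$, is the desired area form.
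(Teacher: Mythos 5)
The theorem you are proving is one the paper only cites (from \cite{RWDisc}), and the authors' own route there, as indicated in Remark~\ref{rmk:strongconverse}, is complex-analytic: the Hele-Shaw domains lift to holomorphic curves in $\mathbb{C}\times\mathbb{P}^1$ with totally-real boundary condition given by the graph of $\partial\phi/\partial z$, and the smooth Jordan hypothesis forces the curve at time $T$ to be an embedded holomorphic disc, at which point moduli theory for such discs gives the two-sided deformation. Your approach is entirely PDE-theoretic and genuinely different: extend the flow backward freely, read off the permeability from the strong Hele-Shaw equation, and glue. The ``use the freedom inside, the flow is forced outside'' dichotomy you identify is exactly the right way to understand the statement, and the Borel/Seeley extension plus jet-matching in the normal-coordinate change $s=d(x,t)$ is a clean way to get smoothness of $\kappa'$ across $\partial\Omega$ (the smooth dependence of $p_t$ and $\partial_n p_t$ on $t$ that you rely on is precisely what the paper proves in Appendix~\ref{appendix}).

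The gap is in the first step. You invoke ``classical short-time existence of the strong Hele-Shaw flow with smooth Jordan initial datum,'' needing moreover that the resulting family $\{\Omega_t\}_{t\in[T,T+\epsilon_1)}$ be smooth \emph{up to and including} $t=T$, so that $d$ has a full one-sided $\infty$-jet to extend. The classical short-time existence results (Vinogradov--Kufarev, Richardson, Gustafsson, Tian) require real-analytic initial boundary — indeed the paper itself flags this when motivating the theorem. For merely $C^\infty$ data the forward injection problem is well-posed by Escher--Simonett-type parabolic theory, but that is a nontrivial maximal-regularity result, not folklore, and you would additionally need to verify (i) it applies with the variable permeability $\kappa=dA/\eta$ rather than $\kappa\equiv1$, and (ii) the solution is smooth in $(x,t)$ down to $t=T$, not only for $t>T$ where Hele-Shaw instantly analytifies. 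As written, this half of your argument is essentially assuming the hardest part of the cited theorem rather than proving it; with a precise reference or proof for that forward regularity, the rest of your construction goes through.
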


\section{Designer Potentials}\label{sec:designer}

In this section we show how to produce potentials with particular prescribed properties (we do this only on $\mathbb P^1$ but a similar story holds for $\mathbb D$).  We first show that any (reasonable) strictly increasing family of smooth domains in $\mathbb P^1$ is the Hele-Shaw flow for some smooth K\"ahler potential.   Recall $B(t)$ denotes the geodesic ball centred at the origin of area $t$ taken with respect to $\omega_{FS}$.

\begin{theorem}\label{thm:existencepotential}
Suppose $ \{\Omega_t\}_{t\in (0,1)}$ is a family of subsets of $\mathbb P^1$ that is 
\begin{enumerate}
\item smoothly bounded, varies smoothly,  and is simply connected for all $t$,
\item strictly increasing, i.e.\ $\Omega_t \Subset \Omega_{t'}$ for $t<t'$, with non-vanishing normal velocity of the boundary $\partial \Omega_t,$ and
\item standard as $t$ tends to 0 and as $t$ tends to 1.
\end{enumerate}
 Then there exists a smooth $\phi\in C^{\infty}(\mathbb P^1)$ such that $\{\Omega_t\}_{t\in (0,1)}$ is the Hele-Shaw flow with respect to the K\"ahler form $\omega_{FS} + dd^c\phi$.
\end{theorem}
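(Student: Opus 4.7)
My plan is to reverse-engineer a permeability function $\kappa$ from the strong Hele-Shaw PDE satisfied by the given family, promote it to a global Kähler area form on $\mathbb{P}^1$, and then invoke Gustafsson's identification (Proposition \ref{prop:gust}) to identify the weak Hele-Shaw flow of that area form with $\{\Omega_t\}$.

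First, for each $t\in(0,1)$ let $p_t>0$ denote the positive Green's potential on $\Omega_t$ with $p_t=0$ on $\partial\Omega_t$ and $\Delta p_t=-\delta_0$, and let $v_t>0$ denote the outward normal speed of $\partial\Omega_t$, which is strictly positive by hypothesis~(2). The strong Hele-Shaw equation $V_t=-\kappa\nabla p_t$ from \eqref{eq:HSclassical:again} forces
$$\kappa=\frac{v_t}{|\nabla p_t|}\quad\text{on }\partial\Omega_t,$$
where $|\nabla p_t|>0$ on the boundary by the Hopf boundary-point lemma. Since the smoothly-varying family $\{\partial\Omega_t\}_{t\in(0,1)}$ foliates $\mathbb{P}^1\setminus\{0,\infty\}$, where $\infty$ denotes the single limit point of $B(t)^c$ as $t\to1$, and Green's functions depend smoothly on smoothly-varying smoothly-bounded domains, this produces a smooth positive function $\kappa$ on $\mathbb{P}^1\setminus\{0,\infty\}$.

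Next I would extend $\kappa$ to a smooth positive function on all of $\mathbb{P}^1$ and build the desired Kähler potential. The ``standard'' hypothesis gives $\Omega_t=B(t)$ near $t=0$ and near $t=1$, so in neighborhoods of $0$ and $\infty$ the prescribed family is literally the Hele-Shaw flow for $\omega_{FS}$; by the uniqueness in the formula for $\kappa$, the function $\kappa^{-1}$ must agree with the Lebesgue density of $\omega_{FS}$ on these neighborhoods, yielding a smooth global extension. Setting $\eta:=\kappa^{-1}\,dA$, the strong Hele-Shaw equation together with $\int_{\Omega_t}(-\Delta p_t)\,dA=1$ yields $\tfrac{d}{dt}\int_{\Omega_t}\eta=1$, so $\int_{\mathbb{P}^1}\eta=1=\int_{\mathbb{P}^1}\omega_{FS}$. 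Because $H^{1,1}(\mathbb{P}^1)\cong\mathbb{R}$, the global $dd^c$-lemma then produces $\phi\in C^{\infty}(\mathbb{P}^1)$ with $\omega_{FS}+dd^c\phi=\eta$.

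Finally, by construction $\{\Omega_t\}$ satisfies the strong Hele-Shaw equation for the area form $\omega_{FS}+dd^c\phi$. For every $t<1$ the standard-near-$1$ hypothesis gives $\Omega_t\Subset\mathbb{C}=\mathbb{P}^1\setminus\{\infty\}$, so Proposition \ref{prop:gust} applied in this chart identifies the weak Hele-Shaw flow on $\mathbb{C}$ with $\{\Omega_t\}$; a locality argument in the spirit of Lemma \ref{lem:locality} (or, equivalently, a direct rerun of the Richardson-lemma proof of Proposition \ref{prop:gust} using the logarithmic-potential candidate $\tilde{\psi}_t$ on $\mathbb{P}^1$) promotes this identification to the weak flow on $\mathbb{P}^1$. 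The only real technical obstacle is verifying that the candidate permeability $\kappa$ glues smoothly at the two distinguished points $0$ and $\infty$, which is precisely what the ``standard'' hypotheses are designed to guarantee; once this is in place, the $dd^c$-lemma and the weak/strong identification are direct applications of machinery already developed in the paper.
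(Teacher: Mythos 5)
Your proposal is correct and follows essentially the same strategy as the paper: recover the permeability $\kappa$ on $\mathbb{P}^1\setminus\{0,\infty\}$ from the strong Hele--Shaw equation $\kappa=v_t/|\nabla p_t|$ on $\partial\Omega_t$, extend it smoothly across $0$ and $\infty$ via the ``standard'' hypotheses, build the potential, and then invoke Gustafsson's identification (Proposition~\ref{prop:gust}) plus locality. Two points of divergence are worth flagging. First, to produce $\phi$ you compute the total mass $\int_{\mathbb{P}^1}\kappa^{-1}\,dA=1$ and invoke the global $dd^c$-lemma on $\mathbb{P}^1$; the paper instead writes $\phi$ down as an explicit logarithmic potential $\phi(z)=\int\log|z-\zeta|^2\,\kappa(\zeta)^{-1}\,dA_\zeta-\log(1+|z|^2)$ and verifies directly (using $\kappa\equiv\pi(1+|z|^2)^2$ near $0$ and $\infty$) that it extends smoothly over $\infty$ -- the explicit formula is needed anyway to apply Proposition~\ref{prop:gust}, and the two routes carry the same information. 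Second, you invoke smooth dependence of the Green's function $p_t$ on the smoothly varying domain $\Omega_t$ as a standard fact; the paper remarks that it could not locate a convenient reference and supplies a proof in Appendix~\ref{appendix}, so if you rely on it you should either cite or reprove it rather than treat it as black-box folklore. With those caveats your write-up is sound; in particular the Hopf-lemma justification that $|\nabla p_t|>0$, the foliation of $\mathbb{P}^1\setminus\{0,\infty\}$ by the boundaries $\partial\Omega_t$, and the identification of $\kappa^{-1}\,dA$ with $\omega_{FS}$ near $0$ and $\infty$ (since $B(t)$ is the strong Hele--Shaw flow for $\omega_{FS}$ by rotational symmetry and Theorem~\ref{thm:RWNold0}) are all valid.
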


\begin{proof}
The idea of the proof is to construct a smooth function $\kappa$ on $\mathbb C$ such that $\{\Omega_t\}$ is the strong Hele-Shaw flow with respect to the permeability $\kappa$.   Since $\{\Omega_t\}_{t\in(0,1)}$ is assumed to be standard as $t$ tends to 1 we have $\Omega_t\subset \mathbb C$ for all $t\in (0,1)$ and so by Lemma \ref{lem:locality} we may as well consider the Hele-Shaw flow as taking place in $\mathbb C$.  Let $p_t$ satisfy
$$ p_t = 0 \text{ on } \partial \Omega_t \text{ and } \Delta p_t =-\delta_0.$$
As already  mentioned, the fact that $p_t$ exists and is smooth on $\overline{\Omega}_t\setminus \{0\}$ is classical.   What is also true is that $p_t$ varies smoothly with $t$; this is presumably also well-known in some circles, but since we were not able to find a convenient reference we give a proof in the Appendix (Corollary \ref{cor:ptsmoothint}).

Assuming this smoothness for now,  we use $p_t$ to define a function $\kappa$ by requiring that
\begin{equation}
 V_t = -\kappa \nabla p_t \text{ on } \partial \Omega_t\text{ for } t\in (0,1)\label{eq:HSclassical}
\end{equation}
where $V_t$ is the normal velocity of $\partial \Omega_t$.   Since $\{\Omega_t\}_{t\in (0,1)}$ is increasing smoothly and $V_t$ was assumed to be non-vanishing we see that $\kappa$ is a well-defined strictly positive smooth function on $\mathbb C\setminus \{0\}$.  

Now we use the assumption that $\{\Omega_t\}_{t\in (0,1)}$ is standard as $t$ tends to zero to deduce that $\kappa$ extends to a smooth function over $0$.  Assume $t\ll 1$.  By explicit calculation with the Fubini-Study metric we know that the disc with area $t$ has radius
$$ R_t: = \left(\frac{t}{1-t}\right)^{1/2}.$$
To see this, recall our convention with the Fubini-study metric is that $\mathbb P^1$ has area 1, and the formula for $R_t$ follows from the calculation
$$\int_{|z|<R}  \frac{dxdy}{2\pi(1+|z|^2)^2} = \int_{0}^R \frac{r dr}{(1+r^2)^2} = 1- \frac{1}{1+R^2}.$$
Thus by symmetry, for sufficiently small $t$
$$ \Omega_t = \{ z\in \mathbb C : |z|<R_t\}$$
and so
 $$p_t(z)= -\frac{1}{4\pi} ( \log |z|^2  - \log(R_t^2)).$$   Clearly $\kappa$ is radially symmetric near $0$, so it is sufficient to compute it at a point $z_t:=(R_t,0)$ for small $t$.  To do so observe that at $z_t$ we have $$\nabla p_t = -\frac{1}{2\pi R_t} \left( \begin{array}{c} 1 \\ 0 \end{array}\right).$$  On the other hand the normal velocity of $\partial \Omega_t$ at the point $z_t$ is $\frac{dR_t}{dt}\left(\begin{smallmatrix}1\\0\end{smallmatrix}\right)$ and so the defining equation \eqref{eq:HSclassical} for $\kappa$ becomes
$$\frac{1}{2 R_t}\frac{1}{(1-t)^2} =  \frac{\kappa(z_t)}{2\pi R_t}.$$
After some calculation this yields
\begin{equation}
 \kappa(z)  = \pi (1+ |z|^2)^2 \text{ near } z=0 \label{eq:kappanear0}
 \end{equation}
which clearly extends smoothly over $z=0$. 

Now define
\begin{equation}
 \phi(z) =  \int_{\mathbb C} \log |z-\zeta|^2 \frac{dA_\zeta}{\kappa(\zeta)} - \log(1 + |z|^2) \text{ for } z\in \mathbb C\label{eq:defdesignerphi}
\end{equation}
which is a smooth function on $\mathbb C$ chosen so that
\begin{equation}
 dd^c (\log(1+|z|^2) + \phi) = \frac{dA}{\kappa} \text{ on } \mathbb C.\label{eq:defphi}
\end{equation}
    
Using that $\{\Omega_t\}_{t\in (0,1)}$ is standard as $t$ tends to infinity we have that \eqref{eq:kappanear0} also holds for $|z|$ sufficiently large.  We claim this implies $\phi$ extends to a smooth function on $\mathbb P^1$ and $\omega_{\phi}$ is strictly positive on $\mathbb P^1$.   To see this, start with the identity
    $$\frac{1}{\pi}\int_{\mathbb C} \frac{ \log |z-\zeta|^2}{(1+ |\zeta|^2)^2} dA_\zeta = \log( 1+ |z|^2)$$
 (this can be seen by noting that the difference is harmonic on $\mathbb C$ bounded and equal to zero at $z=0$).  Now the same calculation as above means the assumption that $\{\Omega_t\}$ is standard as $t$ tends to 1 implies $C>0$ such that \eqref{eq:kappanear0} holds on $\{ |z|>C\}$.  Therefore
 $$\phi(z) = \frac{1}{\pi} \int_{|z|<C} \log |z-\zeta|^2\left( \frac{\pi}{\kappa(\zeta)} - \frac{1}{(1+ |\zeta|^2)^2}\right) dA_\zeta$$
which one sees extends smoothly over $z=\infty$ in such a way that makes $\omega_{\phi}$ strictly positive as claimed.  (We remark that this can also be seen abstractly, since the flow being standard near $0$ and $\infty$ means that $\kappa$ has to agree with the permeability for the standard flow for $(\mathbb P^1,\omega_{FS})$.)

Now by construction $\{\Omega_t\}$ is the strong Hele-Shaw flow with respect to $\kappa$, and hence by Proposition \ref{prop:gust}, is the strong Hele-Shaw flow for $\omega_{FS} + dd^c\phi$ as desired. 
\end{proof}

\begin{remark} \label{remark:thm:existencepotential}
We observe that the above proof actually shows slightly more, namely that if $\{\Omega_t\}_{t\in (0,T]}$ is a smooth family of strictly increasing domains that is standard as $t\to 0$ then setting $X':=\Omega_{T}$ there exists a $\phi\in C^{\infty}(X')$ such that $\{\Omega_t\}_{t\in (0,T)}$ is the Hele-Shaw flow for $(X',\omega_{\phi})$.
\end{remark}

\begin{remark}
The Hele-Shaw flow depends only on the form $\omega + dd^c\phi$.  From the proof of Theorem \ref{thm:existencepotential} one sees that $\{\Omega_t\}$ determines $\kappa$ uniquely, and thus $\phi$ is unique up to addition of a harmonic function.
\end{remark}

Now let $S$ be a finite union of points and non-intersecting smooth embedded curve segments in $\mathbb{P}^1\setminus \{0\}.$ Using similar ideas to above we now show that there are K\"ahler potentials whose Hele-Shaw flow is smoothly bounded and simply connected until it develops a tangency along $S$.

\begin{proposition}\label{prop:existencetangency}
There exists a $\phi\in C^{\infty}(\mathbb P^1)$ such that $\omega_{\phi}$ is strictly positive, and whose associated Hele-Shaw flow develops tangency along $S$.
\end{proposition}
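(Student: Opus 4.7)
The plan is to reduce the proposition to the construction of a suitable family of domains, for which one then applies the Designer Potentials machinery of Theorem \ref{thm:existencepotential}.

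First, I would fix the critical domain $\Omega_T$: pick a simply connected open set $\Omega_T \subset \mathbb{P}^1 \setminus \{0\}$ containing the origin whose boundary $\partial \Omega_T$ is the image of a smooth locally embedded loop with tangential self-intersections precisely along $S$. Concretely, one starts from a small disc containing $0$ and perturbs its boundary outward toward $S$ so that it pinches tangentially onto each component of $S$, which is purely a soft topological/geometric construction given the hypothesis on $S$.

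Second, invoke Theorem \ref{thm:RWNold2} together with its extension in \cite[Remark 7.1]{RWDisc} applied to the tangential case. This should produce a smooth area form $\eta'$ on a neighbourhood of $\overline{\Omega_T^c}$ (agreeing with the Fubini--Study area form outside $\Omega_T$), together with an $\epsilon>0$, such that $\Omega_T$ is the endpoint of a strong Hele-Shaw flow $\{\Omega_t\}_{t\in(T-\epsilon,T]}$ with respect to $\eta'$. For $t<T$ close to $T$, the domains $\Omega_t\Subset \Omega_T$ are then smoothly bounded Jordan domains varying smoothly with non-vanishing normal velocity, so that the first half of Definition \ref{def:tangency} holds automatically.

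Third, I would extend $\{\Omega_t\}$ to all of $(0,1)$: backwards, shrink $\Omega_{T-\epsilon}$ smoothly down to the standard geodesic balls $B(t)$ so that the family is standard as $t\to 0$; forwards, expand $\Omega_T$ smoothly out to become standard as $t\to 1$. Following the computation in the proof of Theorem \ref{thm:existencepotential}, this gives a smooth strictly positive permeability $\kappa\colon \mathbb{C}\to\mathbb{R}_{>0}$ defined via the strong Hele-Shaw equation $V_t=-\kappa\nabla p_t$ on $\partial\Omega_t$ for each $t\in(0,1)\setminus\{T\}$, matching the permeability induced by $\eta'$ near $\overline{\Omega_T^c}$ and extending smoothly over $0$ and $\infty$ thanks to the standard-at-endpoints condition. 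Define $\phi$ by formula \eqref{eq:defdesignerphi}; the same argument as in Theorem \ref{thm:existencepotential} shows that $\phi$ extends smoothly over $z=\infty$ and $\omega_\phi$ is K\"ahler on $\mathbb{P}^1$. Proposition \ref{prop:gust}, applied separately on $(0,T)$ and $(T,1)$, identifies $\{\Omega_t\}_{t\neq T}$ with the weak Hele-Shaw flow for $\omega_\phi$; at $t=T$ itself, upper semicontinuity of $\psi_T$ and monotonicity in $t$ force the weak flow domain at time $T$ to equal our constructed $\Omega_T$, so that the flow develops tangency along $S$ at time $T$.

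The main obstacle is the second step: the backwards existence of the strong Hele-Shaw flow from a domain whose boundary is not smoothly bounded but has tangential self-intersections along $S$. The statement of Theorem \ref{thm:RWNold2} as given in the excerpt assumes a smoothly bounded Jordan domain, so the argument depends essentially on the tangential-boundary extension in \cite[Remark 7.1]{RWDisc} to produce the modified area form $\eta'$ near $\Omega_T$. A related subtlety is verifying that the $\kappa$ assembled from the strong flows on either side of $T$ fits together smoothly across $\partial\Omega_T$; the tangential (rather than transverse) nature of the self-intersection at $S$ is exactly what makes this possible.
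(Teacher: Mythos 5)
Your proposal correctly identifies the overall strategy (construct $\Omega_T$ with tangential self-intersections along $S$, flow backwards a short time, extend to $t\to 0$, apply Proposition~\ref{prop:gust}), and you correctly flag the crux of the matter: Theorem~\ref{thm:RWNold2} is stated for a \emph{smoothly bounded Jordan domain}, and $\Omega_T$ is not one. But you then leave this as a hope that some ``tangential-boundary extension'' of that theorem exists, and this is exactly the gap. The paper does not rely on any such extension — none is available — and instead resolves the obstruction with a genuinely different move that is missing from your argument: it passes to a covering space. One picks a point $z_i$ in each connected component of $\mathbb{P}^1\setminus\overline{\Omega_T}$, takes the universal cover $\pi\colon\Sigma\to\mathbb{P}^1\setminus\{z_i\}$, and observes that the locally embedded curve $\partial\Omega_T$ \emph{lifts to an embedded curve} in $\Sigma$, so that $\pi^{-1}(\Omega_T)$ is a disjoint union of copies of $\Omega_T$ each of which \emph{is} a smoothly bounded Jordan domain $\Omega'$. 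Theorem~\ref{thm:RWNold2} then applies verbatim on $\Sigma$ to $\Omega'$ with the pulled-back area form, and the resulting short-time backward strong flow is pushed forward to $\mathbb{P}^1$. Without this covering-space step your second step has no justification, because downstairs the tangential self-intersections break the hypotheses of the only backward-existence result the paper has.

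A second, smaller problem is your plan to extend the flow \emph{forward} past $t=T$ so as to be standard as $t\to 1$ and then invoke Theorem~\ref{thm:existencepotential} on all of $(0,1)$. This cannot work as stated: $\partial\Omega_T$ is not smooth and, for $t$ slightly larger than $T$, the natural continuation of the flow is no longer simply connected (the self-tangency pinches off regions), so the hypotheses of Theorem~\ref{thm:existencepotential} fail on $(T,1)$. The paper sidesteps this entirely: it builds the permeability/area form only on $(0,T)$ (standard as $t\to 0$, using Remark~\ref{remark:thm:existencepotential} rather than the full Theorem), arranges that the area form equals $\omega_{FS}$ on $\Omega_T\setminus\Omega_{T-\epsilon}$ via $\eta'$, and then simply \emph{declares} the form to be $\omega_{FS}$ on $\mathbb{P}^1\setminus\Omega_T$, obtaining a global smooth Kähler form without ever continuing the Hele--Shaw family beyond $T$. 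Finally, the identification of the weak flow at the singular time $T$ is obtained by continuity of the Hele--Shaw flow upstairs on $\Sigma$, not by an upper-semicontinuity/monotonicity argument downstairs at $t=T$, again because the well-posedness lives on the cover where $\Omega'$ is a genuine Jordan domain.
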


\begin{proof}
It is clear that one can find a simply connected domain $\Omega$ containing $0$ such that $\partial \Omega$ is the image of a smooth locally embedded curve $\gamma$ intersecting itself tangentially precisely along $S$ and so $\overline{\Omega_t}\setminus S$ is connected as in Figure \ref{fig1} (use induction on the number of components of $S$). Let $$T:=\int_{\Omega}\omega_{FS}
.$$ 
We construct the Hele-Shaw flow backwards starting at $\Omega_{T}:=\Omega$.

Pick a point $z_i$ in each connected component of $\mathbb{P}^1\setminus \overline{\Omega_T},$ and let $\pi$ be the projection from the universal cover $\Sigma$ of $\mathbb{P}^1$ with the points $z_i$ removed. Then $\gamma$ lifts to a smooth embedded curve in $\Sigma$ and so $\pi^{-1}(\Omega_T)$ is a disjoint union of copies of $\Omega_T$.   We pick one of them and call it $\Omega'$ which is smoothly bounded and simply connected.   Then Theorem \ref{thm:RWNold2} implies that there exists a smooth area form $\eta'$ on a neighbourhood of $\Sigma\setminus \Omega',$ equal to $\eta:=\pi^*\omega_{FS}$ on $\Sigma\setminus \Omega',$ such that the strong Hele-Shaw flow exists starting from $\Omega'$ with respect to $\eta'$ for a short while backwards in time.   We denote the projection of this Hele-Shaw flow to $\mathbb P^1$ by $\{\Omega_t\}_{t\in(T-\epsilon,T]}$. 

We then extend this to a family of domains $\Omega_t,$ $t\in(0,T),$ in $\mathbb{P}^1$, with the properties as in Theorem \ref{thm:existencepotential}, so by this Theorem and Remark \ref{remark:thm:existencepotential} we have an area form $\omega'$ on $\Omega_T$ such that $\{\Omega_t\}_{t\in(0,T)}$ is a strong Hele-Shaw flow with respect to $\omega'$. We also have that $\omega'=\omega_{FS}$ on $\Omega_{T}\setminus \Omega_{T-\epsilon}$. We can thus extend $\omega'$ to a smooth K\"ahler form on $\mathbb{P}^1$ by letting it be equal to $\omega_{FS}$ on $\mathbb{P}^1\setminus \Omega_{T}$.   Thus $\{\Omega_t\}_{t\in (0,T)}$ is the strong Hele-Shaw flow with respect to the area form $\omega'$ on $\mathbb P^1$, and thus also the Hele-Shaw flow by Proposition \ref{prop:gust}.  

On the other hand, by the continuity of the Hele-Shaw flow (applied on $\Sigma$) it follows that $\Omega_T$ is the Hele-Shaw domain of $\omega'$ at time $T$. Thus if $\phi$ is a smooth function so that $\omega'=\omega_{FS}+dd^c\phi$ we get that the Hele-Shaw flow with respect to $\phi$ develops a tangency along $S$ at time $T.$
\end{proof}

\begin{remark}
If we assume in addition that $S$ is such that one can find such an $\Omega_T$ with real-analytic boundary, then instead of using Theorem \ref{thm:RWNold2}  one can use the classical short-time existence result of the Hele-Shaw backwards for small time,  starting with simply connected domain with real analytic boundary.   Such $S$ do give explicit singularities of geodesic rays at specific points (Theorem \ref{thm:notc2}), but the assumption that $\Omega_T$ need have real analytic boundary strictly decreases the collection of $S$ to which the theorem applies.
\end{remark}

\section{Dirichlet Problem for the Homogeneous Monge-Amp\`ere Equation}\label{sec:HCMA}

In this section and the next we will mainly focus on the case $(X,\omega)=(\mathbb{P}^1,\omega_{FS})$.  We will return to the case $(\mathbb{D}, \omega_P)$ at the end of Section \ref{sec:reg}.

\subsection{Preliminary definitions}
We first consider two versions of the Dirichlet Problem for the Homogeneous complex Monge-Amp\`ere Equation, first over the disc and second over the punctured disc.    Again we let $\phi\in C^{\infty}(\mathbb{P}^1)$ be such that $\omega_{FS}+ dd^c\phi>0$, and $\pi_{\mathbb{P}^1}\colon \mathbb{P}^1\times \overline{\mathbb D} \to \mathbb{P}^1$ and $\pi_{\mathbb D}\colon \mathbb{P}^1\times \overline{\mathbb D} \to \overline{\mathbb D}$ be the projections. 

\begin{definition}(Weak Solution to the HCMA)\

\begin{enumerate}
\item Let
\begin{equation}\label{eq:weaksolutionD}
 \Phi := \sup 
 \left\{
\begin{array}{c}
  \psi \colon  \mathbb{P}^1\times \overline{\mathbb D}\to \mathbb R\cup \{-\infty\} : \psi \text{ is usc, }\pi_{\mathbb{P}^1}^*\omega + dd^c\psi \ge 0 \\\text{ and } \psi(z,\tau)\le \phi(\tau z) \text{ for } (z,\tau)\in \mathbb{P}^1\times  \partial\mathbb D
 \end{array}
 \right\}.
\end{equation}
\item Let
\begin{equation}\label{eq:weaksolutionDtimes}
\tilde{\Phi} :=\sup \left\{
\begin{array}{c}
\psi \colon \mathbb{P}^1\times \overline{\mathbb D}\to \mathbb R\cup \{-\infty\} : \psi\text{ is usc, } \pi_{\mathbb{P}^1}^*\omega + dd^c\psi\ge 0 \\\text{and } \psi(z,\tau)\le \phi(z) \text{ for } (z,\tau)\in \mathbb{P}^1\times \partial \mathbb D \text{ and } \nu_{(0,0)}(\psi)\ge 1
\end{array}
\right\}.
\end{equation}
\end{enumerate}
\end{definition}

So the difference between these two definitions is that in the second the boundary data is $S^1$-invariant but has an additional requirement of giving a prescribed singularity at the point $(0,0)$.  However these two quantities carry the same information as given by:
\begin{proposition} \label{prop:equiv}
We have that $$\Phi(z,\tau)+\ln|\tau|^2+\ln(1+|z|^2)=\tilde{\Phi}(\tau z,\tau)+\ln(1+|\tau z|^2) \text{ for } (z,\tau)\in \mathbb P^1\times \overline{\mathbb D}^{\times}.$$
\end{proposition}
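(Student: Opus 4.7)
The approach is to recast both envelopes in terms of global psh potentials on $\mathbb{P}^1\times\overline{\mathbb{D}}$ and then establish a bijection between their candidate families. Set $u(z,\tau) := \Phi(z,\tau) + \log(1+|z|^2)$ and $\tilde u(w,\tau) := \tilde\Phi(w,\tau) + \log(1+|w|^2)$, so the stated identity becomes
$$\tilde u(\tau z,\tau) = u(z,\tau) + \log|\tau|^2$$
on $\mathbb{P}^1\times\overline{\mathbb{D}}^\times$. The key geometric ingredient is the biholomorphism $F(z,\tau) = (\tau z,\tau)$ of $\mathbb{P}^1\times\overline{\mathbb{D}}^\times$, with inverse $F^{-1}(w,\tau) = (w/\tau,\tau)$, together with the observation that $\log|\tau|^2$ is psh on $\mathbb{P}^1\times\overline{\mathbb{D}}$ with Lelong number exactly $1$ at the point $(0,0)$.

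In terms of these potentials the extremal characterisations become: $u$ is the supremum of all psh $v$ on $\mathbb{P}^1\times\overline{\mathbb{D}}$ satisfying $v(z,\tau)\le \phi(\tau z) + \log(1+|z|^2)$ on $|\tau|=1$, while $\tilde u$ is the supremum of all psh $\tilde v$ on $\mathbb{P}^1\times\overline{\mathbb{D}}$ satisfying $\tilde v(w,\tau)\le \phi(w) + \log(1+|w|^2)$ on $|\tau|=1$ together with $\nu_{(0,0)}(\tilde v)\ge 1$. My plan is to show that the assignment $v\mapsto \tilde v := v\circ F^{-1} + \log|\tau|^2$ is a bijection between these two candidate families, with inverse $\tilde v\mapsto v := \tilde v\circ F - \log|\tau|^2$.

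In the forward direction, given a candidate $v$ for $u$, the function $v\circ F^{-1}(w,\tau) = v(w/\tau,\tau)$ is psh on $\mathbb{P}^1\times\mathbb{D}^\times$ as the holomorphic pullback of a psh function, and adding the psh function $\log|\tau|^2$ preserves pshness. Extension across $\{\tau=0\}$ follows from the classical removable singularity theorem for psh functions, since $v$ is bounded above on the compact space $\mathbb{P}^1\times\overline{\mathbb{D}}$, which gives $\tilde v \le \sup v + \log|\tau|^2 \le \sup v + \log(|w|^2+|\tau|^2)$ near $(0,0)$; this same bound simultaneously yields the Lelong number condition $\nu_{(0,0)}(\tilde v) \ge 1$. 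The boundary condition is immediate: for $|\tau|=1$ we have $|w/\tau|=|w|$, so $\tilde v(w,\tau) = v(w/\tau,\tau)\le \phi(w) + \log(1+|w|^2)$.

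The reverse direction is analogous: given a candidate $\tilde v$ for $\tilde u$, the function $\tilde v\circ F(z,\tau) = \tilde v(\tau z,\tau)$ is psh as the holomorphic pullback via $F$, and subtracting $\log|\tau|^2$ preserves pshness on $\{\tau\ne 0\}$ where $\log|\tau|^2$ is pluriharmonic. The main technical point here is extension across $\{\tau=0\}$, and this is precisely where the Lelong number hypothesis is used: the bound $\tilde v(w,\tau)\le \log(|w|^2+|\tau|^2) + O(1)$ near $(0,0)$, evaluated after substituting $w=\tau z$, yields $\tilde v(\tau z,\tau) - \log|\tau|^2\le \log(1+|z|^2) + O(1)$ for $(z,\tau)$ in a bounded subset, so $v$ is locally bounded above near $\tau=0$ and hence extends. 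Taking the supremum over candidates on each side then gives $\tilde u\circ F = u + \log|\tau|^2$, which after substituting back for $u$ and $\tilde u$ is exactly the asserted identity. I anticipate the main obstacle to be a careful bookkeeping at the two codimension-one loci $\{z=\infty\}$ and $\{\tau=0\}$ — in particular ensuring that the correspondence produces honest candidate functions on the full compact space $\mathbb{P}^1\times\overline{\mathbb{D}}$ rather than merely psh functions on the open locus $\{\tau\ne 0\}\cap\{z\ne\infty\}$ where $F$ is a genuine biholomorphism.
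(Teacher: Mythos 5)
Your proposal is correct and is essentially the direct argument the paper itself sketches: the paper notes that $\Phi(z,\tau)+\ln|\tau|^2+\ln(1+|z|^2)-\ln(1+|\tau z|^2)$, viewed as a function of $(\tau z,\tau)$, is a candidate for the envelope defining $\tilde\Phi$, and similarly in the other direction. Your reformulation in terms of the psh potentials $u=\Phi+\log(1+|z|^2)$, $\tilde u=\tilde\Phi+\log(1+|w|^2)$ and the bijection $v\mapsto v\circ F^{-1}+\log|\tau|^2$ between candidate families is the same idea, just carried out candidate-by-candidate rather than directly for the extremal functions.
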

\begin{proof}
This is proved in \cite[Proposition 2.3]{RW}.
\end{proof}

\begin{definition}(Regular solution)
We say that $\Phi$ is \emph{regular} on an open subset $S\subset \mathbb{P}^1\times \overline{\mathbb D}$ if it is smooth on $S$ and the restriction of $\pi_{\mathbb{P}^1}^* \omega + dd^c \Phi$ to $S_{\tau}:= \pi_{\mathbb D}^{-1}(\tau)\cap S$  is strictly positive for all $\tau\in \mathbb D$.  Similarly we say $\tilde{\Phi}$ is \emph{regular} on $S$ if it is smooth on $S\setminus \{(0,0)\}$ and the restriction of $\pi_{\mathbb{P}^1}^* \omega + dd^c \tilde{\Phi}$ to $S_{\tau}$ is strictly positive for all $\tau\in \overline{\mathbb D}^{\times}$.

Finally we say that $\Phi$ (resp.\ $\tilde{\Phi}$) is \emph{regular} if it is regular on all of $\mathbb{P}^1\times \overline{\mathbb D}$ (resp. $X\times \overline{\mathbb D}^\times)$.
\end{definition}

By well-known arguments (see \cite{BedfordTaylor}),  $\tilde{\Phi}$ is usc,  $\pi_{\mathbb{P}^1}^*\omega + dd^c\tilde{\Phi}\ge 0$ and $(\pi_{\mathbb{P}^1}^*\omega + dd^c\tilde{\Phi})^2=0$ away from $(0,0)$ and $\tilde{\Phi}(z,\tau) = \phi(z)$ for $\tau\in \partial \mathbb D$.   Moreover it is not hard to show that $\tilde{\Phi}$ is locally bounded away from $(0,0)$ and $\nu_{(0,0)}\tilde{\Phi}=1$.     Thus $\tilde{\Phi}$ is the weak solution to Dirichlet problem to the Homogeneous Monge-Amp\`ere Equation with boundary data consisting of $\phi(z)$ on $\mathbb{P}^1\times \partial \mathbb D$, and this prescribed singularity at $(0,0)$.     Thinking of $s:= -\ln |\tau|^2$ for $\tau\in \mathbb D^{\times}$ as a time variable let $\phi_s(\cdot) = \tilde{\Phi}(\cdot,\tau)$.  Then the map 
$$s\mapsto \phi_s\text{ for } s\in [0,\infty)$$ is a weak geodesic ray in the space of weak K\"ahler potentials that starts with $\phi$ and has limit the singular potential $\ln |z|^2$ as $s$ tends to infinity.  Moreover if $\tilde{\Phi}$ is regular this is a smooth geodesic ray in the space of K\"ahler metrics.

Similarly $\Phi$ is the weak solution to the same Dirichlet problem  over $\mathbb{P}^1\times \overline{\mathbb D}$ with prescribed boundary $\phi(\tau z)$ over $\mathbb{P}^1\times \partial \mathbb D$.

\subsection{The Duality Theorem}

The duality between $\tilde{\Phi}$ and the Hele-Shaw envelopes $\psi_t$ is provided by the following:

\begin{theorem}(Ross--Witt Nystr\"om \cite[Theorem 2.7]{RW})\label{thm:Legendre} 
Let $\psi_t$ be the Hele-Shaw envelopes associated to $(\mathbb{P}^1,\omega_{FS},\phi)$ and $\tilde{\Phi}$ be the weak solution to the Homogeneous Monge-Amp\`ere Equation as defined in \eqref{eq:weaksolutionDtimes}.   Then
\begin{equation} \label{legendre1}
\psi_t(z)=\inf_{|\tau|>0}\{\tilde{\Phi}(z,\tau)-(1-t)\ln|\tau|^2\}
\end{equation} 
and 
\begin{equation} \label{legendre2}
\tilde{\Phi}(z,\tau)=\sup_{t}\{\psi_{t}(z)+(1-t)\ln |\tau|^2\}.
\end{equation}
\end{theorem}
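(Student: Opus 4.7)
The plan is to establish \eqref{legendre1} directly and then derive \eqref{legendre2} from it by convex Legendre duality in the variable $s := -\ln|\tau|^2$. Set
$$\tilde{\psi}_t(z) := \inf_{|\tau|>0}\{\tilde{\Phi}(z,\tau) - (1-t)\ln|\tau|^2\},$$
so that \eqref{legendre1} asserts $\tilde{\psi}_t = \psi_t$.

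For the inequality $\psi_t \le \tilde{\psi}_t$, I would verify that for each $t$ the function $(z,\tau)\mapsto \psi_t(z) + (1-t)\ln|\tau|^2$ is a candidate for the envelope in \eqref{eq:weaksolutionDtimes} defining $\tilde{\Phi}$: it is usc, $\pi_X^*\omega$-plurisubharmonic (since $\omega+dd^c\psi_t\ge 0$ and $\ln|\tau|^2$ is psh in $\tau$), satisfies the boundary bound $\psi_t(z)\le\phi(z)$ on $X\times\partial\mathbb D$, and has Lelong number at least $1$ at $(0,0)$ by combining $\psi_t(z)\le t\ln|z|^2 + O(1)$ with the weighted AM-GM inequality $t\ln|z|^2 + (1-t)\ln|\tau|^2 \le \ln(|z|^2 + |\tau|^2)$. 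Rearranging $\psi_t(z) + (1-t)\ln|\tau|^2 \le \tilde{\Phi}(z,\tau)$ and taking the infimum over $\tau$ gives $\psi_t \le \tilde{\psi}_t$.

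For the reverse inequality I would show that $\tilde{\psi}_t$ is itself a competitor in the envelope defining $\psi_t$. Setting $|\tau|=1$, where $\tilde{\Phi}(\cdot,\tau)=\phi$, yields $\tilde{\psi}_t\le\phi$. The Lelong bound $\nu_0(\tilde{\psi}_t)\ge t$ follows from $\tilde{\Phi}(z,\tau)\le\ln(|z|^2+|\tau|^2)+O(1)$ by minimizing $\ln(|z|^2+|\tau|^2) - (1-t)\ln|\tau|^2$ in $|\tau|$; the optimum at $|\tau|^2 = \tfrac{1-t}{t}|z|^2$ delivers $\tilde{\psi}_t(z)\le t\ln|z|^2 + O(1)$.

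The main obstacle is showing that $\omega + dd^c\tilde{\psi}_t\ge 0$ and that $\tilde{\psi}_t$ is usc, since a pointwise infimum of $\omega$-psh functions is generally not $\omega$-psh. Here I would invoke Kiselman's minimum principle: the $S^1$-invariance of $\tilde{\Phi}$ in $\tau$ — a consequence of uniqueness of the envelope combined with averaging over $\tau\mapsto e^{i\theta}\tau$ — allows one to write $\tilde{\Phi}(z,\tau) = F(z,-\ln|\tau|^2)$ with $F(z,s)$ convex in $s$ and $\omega$-psh in $z$, and Kiselman's principle applied to this partial Legendre transform yields $\omega$-plurisubharmonicity and upper semicontinuity of $\tilde{\psi}_t$. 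This completes \eqref{legendre1}, and \eqref{legendre2} then follows by Legendre inversion: convexity of $F(z,\cdot)$ in $s$ makes the transform involutive, and the inverse formula translates directly into the claimed identity for $\tilde{\Phi}$.
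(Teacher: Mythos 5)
This theorem is not proved in the present paper: it is quoted verbatim from \cite[Theorem 2.7]{RW}, with only the remark that the argument given there for $X=\mathbb P^1$ applies without change to $X=\mathbb D$. There is therefore no proof in this paper against which to compare your attempt.

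That caveat aside, your two-sided candidate-matching argument for \eqref{legendre1} is the natural route and is essentially correct. A few remarks. Since Lelong numbers are additive, $\nu_{(0,0)}\bigl(\psi_t(z)+(1-t)\ln|\tau|^2\bigr)=t\cdot 1+(1-t)\cdot 1=1$ directly, so the weighted AM--GM step is unnecessary (though not wrong). In the reverse direction, the bound $\tilde\Phi(z,\tau)\le\ln(|z|^2+|\tau|^2)+O(1)$ near $(0,0)$ that you minimize is exactly the statement $\nu_{(0,0)}\tilde\Phi\ge 1$, which must be established separately for the envelope (the paper records that this is ``not hard to show''); and note that $-(1-t)\ln|\tau|^2$ is pluriharmonic on $\overline{\mathbb D}^{\times}$, so the function you infimize is already $\pi_X^*\omega$-plurisubharmonic there without any sign concern. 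Your appeal to Kiselman's minimum principle to get $\omega$-plurisubharmonicity and upper semicontinuity of $\tilde\psi_t$ is the right tool ($S^1$-invariance of $\tilde\Phi$ follows from maximality of the envelope under averaging over $\tau\mapsto e^{i\theta}\tau$); you just need the tube-domain version adapted to $\{\operatorname{Re}w\ge 0\}$, $w=-\log\tau$, rather than the whole plane, and once the easy inequality $\psi_t\le\tilde\psi_t$ is in place it also guarantees the infimum is not identically $-\infty$. Finally, for \eqref{legendre2} you should record that $t\mapsto\psi_t(z)$ is concave and suitably continuous in $t$ (immediate from the envelope definition): this is precisely what makes the Legendre transform an involution and turns your partial Legendre description of $F(z,\cdot)$ into the stated supremum formula for $\tilde\Phi$.
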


\section{Regularity of Geodesic Rays} \label{sec:reg}

We continue with the notation from the previous section, so $\tilde{\Phi}$ is as defined in \eqref{eq:weaksolutionDtimes}.  Since $\tilde{\Phi}(z,\tau)$ is $\pi_{\mathbb{P}^1}^*\omega_{FS}$-psh and and independent of the argument of $\tau$ it follows that for a fixed $z$ the map  $s\mapsto \tilde{\Phi}(z,e^{-s/2})$ is convex in $s:=-\ln |\tau|^2$. Hence the right derivative $$\frac{\partial}{\partial s +} \tilde{\Phi}(z,e^{-s/2})$$ always exists.

\begin{definition}\label{def:ham}
 We define
$$H(z,\tau): = \frac{\partial}{\partial s^+} \tilde{\Phi}(z,e^{-s/2}) \text{ for } (z,\tau)\in X\times \overline{\mathbb D}^{\times}$$
where $s:=-\ln |\tau|^2$.  
\end{definition}

\begin{remark}
By a result of Chen \cite{Chen}, with complements by B\l ocki \cite{Blocki2}, the function $\tilde{\Phi}$ is in fact $C^{1,1}$ and thus $H$ is continuous (even Lipschitz but we will not use this).   
\end{remark}

A key connection with the Hele-Shaw flow is given by:

\begin{proposition}\label{prop:ham}
$$H(z,1) + 1 = \sup\{ t : \psi_t(z) = \phi(z) \} = \sup\{ t : z\notin \Omega_t\}.$$
\end{proposition}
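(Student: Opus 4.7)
The second equality, $\sup\{t : \psi_t(z) = \phi(z)\} = \sup\{t : z \notin \Omega_t\}$, is immediate from the definition $\Omega_t = \{z : \psi_t(z) < \phi(z)\}$ in \eqref{eq:defHS}, so it suffices to prove $H(z,1) + 1 = T(z)$, where $T(z)$ denotes this common value. I would parametrize by $s := -\ln|\tau|^2$ and set $f(s) := \tilde{\Phi}(z, e^{-s/2})$, so $H(z,1) = f'(0^+)$ by Definition \ref{def:ham}. The Legendre formula of Theorem \ref{thm:Legendre} then gives
$$f(s) = \sup_t \{\psi_t(z) + (t-1)s\},$$
exhibiting $f$ as a convex function of $s$ (a sup of affine functions), while the boundary condition $\tilde{\Phi}(z,1) = \phi(z)$ yields $f(0) = \phi(z)$.

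The argument rests on two properties of $g(t) := \psi_t(z)$. First, $g$ is non-increasing, since larger $t$ imposes a stronger Lelong-number constraint. Second, $g$ is concave in $t$: if $\lambda \in [0,1]$ and the $\psi_{t_i}$ are the usc envelopes, then $\lambda\psi_{t_1} + (1-\lambda)\psi_{t_2}$ is usc, bounded above by $\phi$, satisfies $\omega + dd^c(\cdot) \geq 0$, and has Lelong number at $0$ at least $\lambda t_1 + (1-\lambda) t_2$, hence is a candidate for the envelope defining $\psi_{\lambda t_1 + (1-\lambda) t_2}$. Taken together, concavity and monotonicity imply that $g$ is continuous on $(0,V)$, that $\{t : g(t) = \phi(z)\}$ is the closed interval $[0,T(z)]$, and that $g$ is strictly decreasing on $[T(z), V)$.

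For the lower bound $f'(0^+) \geq T(z) - 1$, plugging $t = T(z)$ into the supremum and using $\psi_{T(z)}(z) = \phi(z)$ yields $f(s) \geq \phi(z) + (T(z) - 1)s$. For the matching upper bound I would fix $\eta > 0$ and set $\delta_\eta := \phi(z) - \psi_{T(z)+\eta}(z) > 0$. Concavity of $g$ then provides the affine majorant $g(t) \leq \phi(z) - (\delta_\eta/\eta)(t - T(z))$ for all $t \geq T(z)+\eta$. Splitting the supremum at $t = T(z)+\eta$: on $[0, T(z)+\eta]$ one has $\psi_t(z) + (t-1)s \leq \phi(z) + (T(z)+\eta-1)s$, while on $(T(z)+\eta, V)$ the majorant gives $\psi_t(z) + (t-1)s \leq \phi(z) + (T(z)-1)s + (t - T(z))(s - \delta_\eta/\eta)$, which is at most $\phi(z) + (T(z)-1)s$ once $s \leq \delta_\eta/\eta$. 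Combining, $f(s) \leq \phi(z) + (T(z) + \eta - 1)s$ for small $s$, hence $f'(0^+) \leq T(z) - 1 + \eta$, and letting $\eta \to 0^+$ closes the argument.

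The delicate step is the upper bound: concavity of $g$ past $T(z)$ is precisely what prevents values of $t$ substantially larger than $T(z)$ from dominating the supremum as $s \to 0^+$.
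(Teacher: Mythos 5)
Your proof is correct and follows essentially the same route as the paper's: both apply the Legendre formula \eqref{legendre2} to write $\tilde{\Phi}(z,e^{-s/2})$ as a supremum of affine functions of $s$, use that $t\mapsto\psi_t(z)$ is decreasing and concave (the same candidate-in-the-envelope argument), and read off the right derivative at $s=0$. Your upper-bound step, with the auxiliary slack $\eta$ and the restriction to small $s$, is a slightly more careful rendering of the paper's assertion that $\psi_{t'}(z)+(t'-1)s\le\phi(z)+a$ for all $t'\ge t$ and $s\ge 0$ — a claim that is immediate from monotonicity alone when $V=1$ (since $t'-1<0$), but which for $V=\infty$ genuinely needs the localization to small $s$ that you carry out.
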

\begin{proof}
This is \cite[Proposition 2.8]{RW} and for convenience we repeat the proof here.    From \eqref{legendre2} if $\psi_{t}(z)=\phi(z)$ then $$\tilde{\Phi}(z,e^{-s/2})\geq (t-1)s+\phi(z)$$ and thus $$H(z,1)\geq\sup\{t: \psi_{t}(z)=\phi(z)\}-1.$$    Suppose $\psi_{t}(z)\le \phi(z) +a$ for some $a<0$.  For a fixed $z$ the function $t'\mapsto \psi_{t'}(z)$ is concave and decreasing in $t'$ \cite[Prop. 1.3]{RW}, so for $t\le t'<A$ and $s\ge 0$ we have $\psi_{t'}(z) + (t'-1)s \le \phi(z) +a$.  On the other hand we always have $\psi_{t'}\le \phi$ so if $0\le t'\le t$ then $\psi_{t'}(z) + (t'-1)s\le \phi(z) + (t-1)s$.  Putting this together with (\ref{legendre2}) gives $$\tilde{\Phi}(z,e^{-s/2})\le \phi(z) +  \max( (t-1)s,a)$$ and so $H(z,1)\leq t-1,$ which proves the proposition.
\end{proof}

When the Hele-Shaw domains $\Omega_t$ are simply connected one can say even more.

\begin{definition*}
   Let $f\colon \mathbb D\to \mathbb{P}^1$ be holomorphic.  We say that the graph of $f$ is a  \emph{harmonic disc} for $\Phi$ if $\Phi$ is $\pi_X^*\omega$-harmonic along the graph of $f$, i.e the restriction of $\pi_X^*\omega+dd^c\Phi$ to 
   $\{ (f(\tau), \tau) : \tau\in \mathbb D\}$
vanishes.
\end{definition*}

The main result in \cite{RW} is the following, which says that one can characterize all harmonic discs of $\Phi$ in terms of simply connected Hele-Shaw domains.

\begin{theorem} \label{RWmain}
The graph of $f\colon \mathbb D\to X$ is a harmonic disc for $\Phi$ iff either (1) $f\equiv 0$ or (2) $f(\tau)=\tau^{-1}z$ where $z\in \Omega_1^c$ or (3) $\tau \mapsto \tau f(\tau)$ is a Riemann mapping to a simply connected Hele-Shaw domain $\Omega_t$ taking $0\in \mathbb D$ to $0\in \Omega_t$. 

The function $H$ is constant along the associated discs $\{(\tau f(\tau),\tau)\}$, in the first case $H=-1$, in the second case $H=0$ while in the third case $H=t-1$. 
\end{theorem}

We are now ready to prove Theorem \ref{thm:main} in the case of $(X,\omega)=(\mathbb{P}^1,\omega_{FS})$:

\begin{theorem}\label{thm:main:repeat}
  Suppose the flow Hele-Shaw $\{\Omega_t\}_{t\in [0,1]}$  for a K\"ahler form $\omega_{FS} + dd^c\phi$ on $\mathbb{P}^1$ satisfies
\begin{enumerate}
\item $\{\Omega_t\}_{t\in (0,1)}$ is smoothly bounded and varies smoothly with non-vanishing normal velocity,
\item $\Omega_t$ is simply connected for all $t\in (0,1)$,
\item $\{\Omega_t\}_{t\in (0,1)}$ is standard as $t$ tends to $1$.
\end{enumerate}
Then the weak geodesic ray \eqref{legendre2} obtained as the Legendre transform of the Hele-Shaw envelopes  $\{\psi_t\}$  is regular, and so defines a smooth geodesic ray in the space of K\"ahler metrics on $X$.
\end{theorem}

\begin{proof}

Since by assumption $\Omega_t$ is simply connected for all $t\in (0,1)$ it follows from Theorem \ref{RWmain} that if $\tau \mapsto \tau f(\tau)$ is a Riemann mapping to some $\Omega_t$, $t\in(0,1)$ such that $0\in \mathbb D$ maps to $0\in \Omega_t$, then the graph of $f$ is a harmonic discs of $\Phi$. Note that for fixed $e^{i\theta}\in S^1$, if $\tau \mapsto \tau f(\tau)$ is such a Riemann mapping then so is $\tau \mapsto e^{i\theta}\tau f(e^{i\theta}\tau)$.  Thus for each $t\in(0,1)$ there is an $S^1$-family of corresponding harmonic discs. We also have from Theorem \ref{RWmain} that the graph of $f\equiv 0$ is a harmonic disc, and since $\{\Omega_t\}_{t\in (0,1)}$ is standard as $t$ tends to 1, also the graph of $f\equiv \infty$ is a harmonic disc. 

That these harmonic discs do not intersect for different values of $t$ is clear as they correspond to different values of $H$ and it is also easy to see that the union of all these discs cover $\mathbb{P}^1\times\overline{\mathbb D}$.

Now by Theorem \ref{thm:RWNold0} the foliation is diffeomorphic to the product foliation in a neighbourhood of $\{0\}\times \overline{\mathbb D}$, so in particular it is smooth. The assumption that the Hele-Shaw flow is standard as $t$ tends to $1$ ensures the foliation is also smooth near $\{\infty\}\times \overline{\mathbb D}$.   

Since $\Omega_t$ varies smoothly in $t$, we can find Riemann maps $f_t:\mathbb D \to \Omega_t$ that satisfies $f_t(0)=0$ that vary smoothly with $t$ (see Corollary \ref{cor:riemannmapssmoothint} in the Appendix).  Hence the harmonic discs give a smooth foliation in the remaining part of $\mathbb{P}^1\times \overline{\mathbb D}$, since the normal velocity of $\{\Omega_t\}$ is assumed to be non-vanishing, so every $z\in \mathbb{P}^1\setminus\{0\}$ lies in the boundary of precisely one $\Omega_t$.

Let $D = \{ (f(\tau), \tau)\}$ be one of the harmonic discs. Then $\Phi(f(\tau),\tau)$ is harmonic along $D$, thus for any point $\tau$,  $\Phi(f(\tau),\tau)$ can be expressed as an integral of $\Phi$ over $\partial D$, and the integral depends smoothly on $\tau$.  But $\Phi=\phi$ over $\partial D$ (which is smooth) and the foliation varies smoothly, from which we conclude that $\Phi$ must in fact be smooth.  

For the regularity we argue as follows.  For $\tau\neq 0$ let $T_{\tau}\colon \pi_{\mathbb D}^{-1}(1)\ \to \pi_{\mathbb D}^{-1}(\tau)$ be the flow along the leaves of the above foliation and set $\sigma_{\tau} : = \pi_{\mathbb{P}^1}^* \omega_{FS} + dd^c \Phi |_{\pi_{\mathbb D}^{-1}(\tau)}$.  Then by what is now considered a classical calculation (originally due to Semmes \cite{Semmes} and Donaldson \cite{Donaldson}, see also \cite[Proposition 3.4]{RossNystromTubular} and \cite[Sec 3]{RubinsteinZelditchIII}) we know $T_{\tau}^* \sigma_{\tau} = \sigma_1$.  But $\sigma_1=\omega_{\phi}$ is certainly strictly positive, and hence $\sigma_{\tau}$ is strictly positive as well.

Thus we see that $\Phi$ is a regular solution to the HCMA, and so $\phi_s(\cdot) = \tilde{\Phi}(\cdot,\tau)$ is a regular geodesic ray.

\end{proof}

\begin{remark}
The geodesic ray produced in Theorem \ref{thm:main:repeat} is the same as an example given by Donaldson \cite[p. 24]{Don3}.  The point of view taken there is slightly different, and the initial data is to consider $\mathbb P^1 = S^2 = \{ (x,y,z) : x^2+y^2+z^2=1\}$ and let $h:\mathbb P^1\to \mathbb R$ be smooth, such that $h(x,y,z) = z$ near the poles $z=\pm 1$, and so that $h$ has no further critical points.  Then the sublevel sets $\Omega_t:=\{ f(z)\le t\}$ are all discs, and Donaldson uses the associated Riemann maps to describe explicitly a smooth geodesic ray in the space of K\"ahler potentials on $\mathbb P^1$.
\end{remark}

Now we return to the case $(X,\omega)=(\mathbb{D},\omega_P)$, with $\phi\in C^{\infty}(\mathbb{D})$ such that $\omega_P+dd^c\phi>\epsilon \omega_P$ for some $\epsilon>0$.

Define a function $\tilde{\Phi}(z,\tau)$ on $\mathbb{D}\times \overline{\mathbb{D}}^{\times}$ by
\begin{equation} \label{legendre20}
\tilde{\Phi}(z,\tau)=\sup_{t\in [0,\infty)}\{\psi_{t}(z)-t\ln |\tau|^2\}.
\end{equation}

For a fixed $z$ the function $z\mapsto \tilde{\Phi}(z,e^{-r/2})$ is convex in $r$ which allows us to define the Hamiltonian function:

\begin{definition}\label{def:ham2}
Set
$$H(z,\tau): = \frac{\partial}{\partial r^+} \tilde{\Phi}(z,e^{-r/2}) \text{ for } (z,\tau)\in \mathbb{D}\times \overline{\mathbb D}^{\times}$$
where $r:=-\ln |\tau|^2$. We also let $$H_-(z,\tau): = \frac{\partial}{\partial r^-} \tilde{\Phi}(z,e^{-r/2}).$$  
\end{definition}

Theorem \ref{thm:main} in the case $(X,\omega)=(\mathbb{D},\omega_P)$ can now be formulated in the following way:

\begin{theorem}
 Suppose the flow Hele-Shaw $\{\Omega_t\}_{t\in [0,\infty)}$  for $\omega_P + dd^c\phi>\epsilon \omega_P$ ($\epsilon>0$) on $\mathbb{D}$ satisfies
\begin{enumerate}
\item $\{\Omega_t\}_{t\in (0,\infty)}$ is smoothly bounded and varies smoothly with non-vanishing normal velocity,
\item $\Omega_t$ is simply connected for all $t\in (0,\infty)$.
\end{enumerate}
Then the function $\tilde{\Phi}(z,\tau)$ is a regular solution to the HCMA on $\mathbb{D}\times \mathbb{D}^{\times}$.
\end{theorem}

\begin{proof}
Pick $t,T$ such that $0<t<T<\infty$. It is clear that one can find an increasing family of domains $\Omega_s'$ in $\mathbb{P}^1$ such that 
\begin{enumerate}
\item $\{\Omega_s'\}_{s\in (0,1)}$ is smoothly bounded and varies smoothly with non-vanishing normal velocity,
\item $\Omega_s'$ is simply connected for all $s\in (0,1)$,
\item $\{\Omega_s'\}_{s\in (0,1)}$ is standard as $s$ tends to $1$,
\item $\Omega_s'=\Omega_{Ts}$ for $s\in (0,t/T)$.
\end{enumerate}

By Theorem \ref{thm:existencepotential} $\{\Omega_s'\}_{s\in (0,1)}$ will be the Hele-Shaw flow of some K\"ahler form $\omega_{FS}+dd^c\phi'$ on $\mathbb{P}^1$ and by Theorem \ref{thm:main:repeat} the associated weak geodesic ray $\tilde{\Phi}'$ will be regular.  Moreover by the construction of $\phi'$ is is clear that $$\omega_P+dd^c\phi=T(\omega_{FS}+dd^c\phi')$$ on $\Omega_t$. 

Let $u_P$ be a smooth function on $\mathbb{D}$ such that $\omega_P=dd^cu_p$ and $u_{FS}$ a smooth function on $\mathbb{C}$ such that $\omega_{FS}=dd^cu_{FS}$. Without loss of generality we can assume that $$u_P+\phi=T(u_{FS}+\phi')$$ on $\Omega_t$. It now follows from the proof of Lemma \ref{lem:locality} that for all $s\in(0,t/T)$ we have that 
\begin{equation} \label{eq:last1}
u_P+\psi_{Ts}=T(u_{FS}+\psi'_s)    
\end{equation} 
on $\Omega_t$.

From the definition (\ref{def:ham2}) of $\tilde{\Phi}$ as the Legendre transform of $\psi_t$ it is easy to see that\begin{equation*} \label{thm58i}\tilde{\Phi}(z,\tau)=\psi_{Ts}(z)-Ts\ln |\tau|^2\end{equation*} iff $$Ts\in [H(z,\tau), H_-(z,\tau)].$$
Similarly letting $H'(z,\tau) = \frac{\partial}{\partial r^+}\tilde{\Phi}'(z,e^{-r/2})$ and $H_-'(z,\tau) = \frac{\partial}{\partial r^-}\tilde{\Phi}'(z,e^{-r/2})$ where $r:=-\ln|\tau|^2$ we have
\begin{equation*}\tilde{\Phi}'(z,\tau)=\psi'_{s}(z)+(1-s)\ln |\tau|^2\label{thm58ii}\end{equation*}
iff $$s-1\in [H'(z,\tau),H'_-(z,\tau)].$$  
Combined with (\ref{eq:last1}) we get that 
\begin{equation} \label{eq:thmC}
u_P(z)+\tilde{\Phi}(z,\tau)=Tu_{FS}(z)+T\tilde{\Phi}'(z,\tau)-T\ln|\tau|^2
\end{equation}
on the set 
$$U_t:=(\Omega_t\times \mathbb{D}^{\times})\cap \{(z,\tau): H(z,\tau)<t \text{ and } H'(z,\tau)<t/T-1\}.$$ 
We saw that $\tilde{\Phi}'$ was a regular solution to the HCMA on $\mathbb{P}^1\times \mathbb{D}^{\times}$ so it follows from (\ref{eq:thmC}) that $\tilde{\Phi}$ is a regular solution to the HCMA on $U_t$. 

Now $\Omega_t\times \mathbb{D}^{\times}$ exhausts $\mathbb{D}\times\mathbb{D}^{\times}$ and clearly so do the sets $\{(z,\tau): H(z,\tau)<t\}$. From Theorem \ref{RWmain} we see that $\{(z,\tau): H'(z,\tau)<t/T-1\}$ is equal to the union of the graphs over $\mathbb{D}^{\times}$ of Riemann mappings of $\Omega_s$, $s<T$, mapping zero to zero, and so $\{(z,\tau): H'(z,\tau)<t/T-1\}$ also exhausts $\mathbb{D}\times\mathbb{D}^{\times}$. It follows that $U_t$ exhausts $\mathbb{D}\times\mathbb{D}^{\times}$. Since $\tilde{\Phi}$ is a regular solution to the HCMA on $U_t$ as $t$ was chosen arbitrary we thus get that $\tilde{\Phi}$ is a regular solution to the HCMA on the whole $\mathbb{D}\times \mathbb{D}^{\times}$. 

\end{proof}

\section{Explicit Singularities}

We now give a proof of Theorem \ref{thm:notc2}  and show that a potential whose Hele-Shaw flow that develops a tangency along a set $S$ gives a singularity of the associated weak solution.  

\begin{example}\label{example}
The reader may find the following simple example instructive.  Suppose a Hele-Shaw flow  $\{\Omega_t\}$ develops tangency at a single point  $S= \{z_0\}$.   For simplicity assume there are smooth coordinates $(x,y)$ centered at $z_0$ so that near $z_0$ we have
$$ \Omega_t = \{ y \ge x^2 + (t_0-t) \} \cup \{ y \le -x^2 - (t_0-t)\}$$
giving
$$\partial \Omega_t = \{ y = x^2 + (t_0-t) \} \cup \{ y = -x^2 - (t_0-t)\}.$$
Thus $\partial \Omega_t$ consists of two disjoint parabola for $t<t_0$ that meet at the point $x=y=0$ as $t\to t_0$ from below.  Now let $$h(x,y):= H((x,y),1)$$ where $H$ is as defined in \eqref{def:ham} so by Proposition \ref{prop:ham}
$$h(x,y) = \sup\{t : (x,y)\notin \Omega_t\} -1.$$
Notice that if $y>0$ and $t\le t_0$ then $(0,y)\in \Omega_t$ if and only if $y\ge (t_0-t)$.  Thus
$$ h(0,y) = t_0-y-1 \text{ for } y>0.$$
Similar considerations for $y<0$ then give
$$ h(0,y) = \left \{ \begin{array}{cc} t_0-y-1& y>0 \\ t_0+y-1 & y<0\end{array}\right.$$
and so $\frac{\partial h}{\partial y}$ does not exist at the origin.  Hence $\tilde{\Phi}$ is not $C^2$ at the point $(z_0,1)$, and by Proposition \ref{prop:equiv} the same must be true for $\Phi$.
\end{example}

\begin{customthm}{B}\label{thm:notc2:repeat}
Let $S$ be a finite union of points and curve segments in $\mathbb P^1\setminus\{0\}$.  Let $\phi\in C^{\infty}(\mathbb P^1)$ be a K\"ahler potential and suppose the Hele-Shaw for $\omega + dd^c\phi$ develops tangency along $S$.   Then the weak solution $\Phi$ from \eqref{eq:weakD} to the Dirichlet problem for the HCMA on $\mathbb P^1\times \overline{\mathbb D}$ with boundary data $(z,\tau)\mapsto \phi(\tau z)$ is not twice differentiable at the points $(\tau^{-1}z,\tau)$, $z\in S, |\tau|=1$.
\end{customthm}
\begin{proof}[Proof of Theorem \ref{thm:notc2:preliminary} and Theorem \ref{thm:notc2}]
Suppose first that $\phi$ is as produced by Proposition \ref{prop:existencetangency}.   That is, we have picked points $z_i$ in each component of $\mathbb P^1\setminus \overline{\Omega_T}$ and $\pi\colon \Sigma\to \mathbb P^1\setminus \{z_i\}$ is the universal cover, and $\Omega_{t\in (T-\epsilon,T]}$ is the pushforward of a strong Hele-Shaw flow on $\Sigma$.   This implies that the normal velocity of the boundary of $\Omega_t$ as $t$ tends to $T$ from below is nowhere vanishing.  

Now let $z\in S$, so $\Omega_T$ has boundary tangent to itself at $z$, and so $\Omega_T$ splits locally into two pieces, call them $P_1$ and $P_2$.   Working on $P_1$, the combination of  Proposition \ref{prop:ham} (which says that $\partial \Omega_t$ are the level sets of $H(\cdot,1)-1$) and the fact that $\Omega_t$ varies smoothly imply the partial derivative of $H(\cdot,1)$ in the normal direction to $\Omega_t$ is strictly negative at $z$ (compare Example \ref{example}).  The analagous statement is true for $P_2$, which proves that $H$ is not differentiable  at $(z,1)$.    Thus $\tilde{\Phi}$ is not twice differentiable at the point $(z,1)$, and by Proposition \ref{prop:equiv} the same is true for $\Phi$.  Then  by $S^1$-invariance we see that $\Phi$ cannot be twice differentiable at any point of the form $(\tau^{-1}z, \tau)$ for $z\in S,|\tau|=1$. 

Now if $\phi$ is any K\"ahler potential whose Hele-Shaw $\{\Omega_t\}$ develops tangency along $S$ then it is not hard to see from the proof of Proposition \ref{prop:existencetangency} that $\{\Omega_t\}$ is the pushforward of some Hele-Shaw flow on $\Sigma$ call it $\{\Omega_t'\}$.  The hypothesis on $\Omega_T$ ensure that $\Omega_T'$ is smoothly bounded, and hence by the argument in Remark \ref{rmk:strongconverse} we conclude that the normal velocity is non-vanishing as $t$ tends to $T$ from below (the reader who prefers not to invoke this argument may prefer to make this non-vanishing as part of the hypothesis of what it means to develop a tangency along $S$).  The proof of the Theorem then follows as before.

Finally Theorem \ref{thm:notc2:preliminary} follows from Theorem \ref{thm:notc2} and Proposition \ref{prop:existencetangency}.
\end{proof}

\section{An extension}\label{sec:extension}

So far we have been working under the hypothesis that our Hele-Shaw flow $\{\Omega_t\}_{t\in (0,A)}$ is standard as $t$ tends to $0$ (and also as $t$ tends to 1 when $X=\mathbb P^1$).  We did this to ensure regularity of the associated potential near the point $0$ (resp.\ $\infty$) which we achieved by direct computation.   In this section we explain how this hypothesis can be relaxed.  For simplicity we work only with $(X,\omega) = (\mathbb P^1,\omega_{FS})$ but a similar story holds for the disc.

\begin{definition}
Let  $\Diff_0(\mathbb P^1)$ be the group of $C^{\infty}$-diffeomorphisms of $\mathbb P^1$ such that $\alpha(0)=0$. 
\end{definition}

Given $\alpha\in \Diff_0(\mathbb P^1)$ we define
\begin{equation}
\Omega_t = \alpha( B(t)) \text{ for } t\in (0,1)\label{eq:alphaflow}
\end{equation} 
where, we recall, $B(t)$ denotes the geodesic ball centred at 0 with area $t$ with respect to $\omega_{FS}$.    Clearly  $\{\Omega_t\}_{t\in (0,1)}$ is a strictly increasing, smoothly varying family of smoothly bounded simply connected domains in $\mathbb P^1$ that tends to zero as $t$ tends to 0.  We claim that for $\{\Omega_t\}_{t\in (0,1)}$ constructed in this way the conclusion of Theorem \ref{thm:main} and Theorem \ref{thm:existencepotential} still hold; that is, there exists a K\"ahler potential $\phi$ such that $\{\Omega_t\}_{t\in (0,1)}$ is the Hele-Shaw flow for $\omega_\phi$, and that weak geodesic obtained as the Legendre transform of the Hele-Shaw envelopes $\{\psi_t\}$ is regular.

We sketch why this is the case.  Observe that the only place in which we used that $\{\Omega_t\}_{t\in (0,1)}$ is standard as $t$ tends to $0$ and $1$ in the proof of Theorem \ref{thm:main} was to ensure that $\omega_{\phi}$ was a smooth strictly positive form at $0$ and at $\infty$.  So assume instead that \eqref{eq:alphaflow} holds.   Let $\alpha_0=\operatorname{id}_{\mathbb P^1}\in \Diff_0(\mathbb P^1)$,  whose associated flow is $\{ B(t)\}_{t\in (0,1)}$ which is the Hele-Shaw flow associated to $\omega_{FS}$ and, as we saw in \eqref{eq:kappanear0}, is the classical Hele-Shaw flow on $\mathbb C$ with permeability $\kappa_0(z):=\pi (1+|z|^2)^2$ .   Without loss of generality say $\alpha(z) = z + O(|z|^2)$ near $z=0$.   Then $\alpha$ is $C^{\infty}$ close to $\alpha_0$ in a neighbourhood of $z=0$ which implies that $\Omega_t$ is $C^{\infty}$-close to $B(t)$ for $t$ sufficiently small.  In turn this implies the permeability $\kappa$ as defined in \eqref{eq:HSclassical} is $C^{\infty}$ close to $\kappa_0$ in a punctured neighbourhood of $0$, which is enough to imply it extends across $0$ to a smooth strictly positive function.   The argument near $\infty$ is similar: again without loss of generality say $\alpha(\infty) = \infty$ locally given by $\alpha(1/z) = 1/z + O(1/|z|^2)$ near $z=\infty$.  Given a small neighbourhood $U$ of $\infty$ we can construct an $\alpha_1$ that is equal to $\alpha$ on $\mathbb P^1\setminus U$ and is equal to $\alpha_0$ near $\infty$.  Thus $\alpha$ is $C^{\infty}$ close to $\alpha_1$ and the same argument then applies to deduce that $\phi$ extends smoothly across $\infty$ and $\omega_{\phi}$ is K\"ahler.  Hence Theorem \ref{thm:main} still holds.

The argument for Theorem \ref{thm:existencepotential} is similar, as near $\{0\}\times\overline{\mathbb D}$ the foliation by harmonic discs constructed in the proof of Theorem \ref{thm:main:repeat} for $\alpha$ is (in the obvious sense) $C^{\infty}$-close to that provided by $\alpha_0$, and this is enough to prove that $\tilde{\Phi}$ is smooth over $\{0\}\times\overline{\mathbb D}$.  Arguing similarly with $\alpha_1$ near $\{\infty\}\times\overline{\mathbb D}$ we conclude that Theorem \ref{thm:existencepotential} still holds as well.

Accepting this argument, we see that to any $\alpha\in \Diff_0(\mathbb P^1)$ we have an associated smooth geodesic ray in the space of K\"ahler metrics on $\mathbb P^1$ that starts at $\omega_\phi$ and has limit $\omega + dd^c \ln |z|^2$   at infinity (i.e.\ as $\tau$ tends to zero).  Of course different $\alpha$ can give rise to the same flow, but the ambiguity is precisely coming from the subgroup of ``angular diffeomorphisms'' given by
$$\Gamma:= \{ \alpha\in \Diff_0(\mathbb P^1) : \alpha(B(t)) = B(t) \text{ for all } t\}.$$

Moreover this process can be reversed, since any smooth geodesic joining $\omega_\phi$ to $\omega + dd^c \ln |z|^2$ comes from a regular solution to the complex Monge-Amp\`ere Equation, and thus gives rise to a foliation by harmonic discs.    By the harder direction of \cite[Theorem 3.1]{RW} we know that such discs can  only be those described in  the proof of Theorem \ref{thm:main:repeat}.    Finally it is clear from the proof of Theorem \ref{thm:existencepotential} that different Hele-Shaw flows give rise to different potentials $\phi$ and vice versa.      Thus  in all we have the following explicit description of all smooth geodesics rays in the space of K\"ahler metrics on $\mathbb P^1$ that have limit $\omega_{FS} + dd^c \log |z|^2$ as time tends to infinity:

\begin{theorem}
The duality that associates a weak geodesic ray to the Hele-Shaw flow gives a bijection between $ \Diff_0(\mathbb P^1)/\Gamma$ and
$$ \{ \phi\in \mathbb C^{\infty}(X) :  \exists \text{ a smooth geodesic ray starting at }\omega_\phi  \text{ with limit  } \omega_{FS} + dd^c\ln |z|^2 \}.$$
\end{theorem}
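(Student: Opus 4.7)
The plan is to construct the two maps comprising the claimed bijection and verify they are mutual inverses. The forward direction is essentially already laid out in the preceding discussion of Section \ref{sec:extension}: given $\alpha\in\Diff_0(\mathbb{P}^1)$, define the flow $\Omega_t:=\alpha(B(t))$ for $t\in(0,1)$. The extension of Theorem \ref{thm:existencepotential} indicated in the paragraphs above produces a $\phi\in C^\infty(\mathbb{P}^1)$ with $\{\Omega_t\}$ the Hele-Shaw flow of $\omega_\phi$, and the extension of Theorem \ref{thm:main} yields that the Legendre transform of the associated envelopes $\{\psi_t\}$ is a regular solution to the HMAE on $\mathbb P^1\times\overline{\mathbb D}^\times$. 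The limit at $\tau\to 0$ is $\omega+dd^c\ln|z|^2$ since $\nu_0(\psi_t)=t$ forces $\psi_t\to\ln|z|^2+\mathrm{const}$ as $t\to 1$ (using that the flow is standard, in the $\alpha$-image sense, as $t\to 1$, combined with Theorem \ref{thm:Legendre}).

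For well-definedness modulo $\Gamma$, observe that if $\gamma\in\Gamma$ then $(\alpha\circ\gamma)(B(t))=\alpha(B(t))=\Omega_t$, so the Hele-Shaw flow, and therefore $\phi$, depends only on the class $[\alpha]\in\Diff_0(\mathbb P^1)/\Gamma$. For injectivity, if $\alpha_1$ and $\alpha_2$ yield the same $\phi$, then they yield the same Hele-Shaw flow (since the flow is determined by $\omega_\phi$), so $\alpha_2^{-1}\alpha_1(B(t))=B(t)$ for all $t$, that is $\alpha_2^{-1}\alpha_1\in\Gamma$.

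The main content is surjectivity. Given $\phi$ in the target set, let $\tilde{\Phi}$ be the regular solution on $\mathbb{P}^1\times\overline{\mathbb D}^\times$ obtained from the smooth geodesic ray by Proposition \ref{prop:equiv}. By the classical Semmes-Donaldson theory, regularity of $\tilde{\Phi}$ is equivalent to the existence of a smooth foliation by harmonic discs. Invoking the harder direction of \cite[Theorem 3.1]{RW}, these harmonic discs are precisely the graphs of Riemann mappings $f_t\colon\mathbb D\to\Omega_t$ sending $0$ to $0$ where $\{\Omega_t\}$ is the Hele-Shaw flow of $\omega_\phi$, together with $\{0\}\times\mathbb D$ and $\{\infty\}\times\mathbb D$. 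Smoothness of the foliation then forces $\{\Omega_t\}$ to be smoothly bounded and smoothly varying with non-vanishing normal velocity, and simply connected. We then construct $\alpha\in\Diff_0(\mathbb{P}^1)$ satisfying $\alpha(B(t))=\Omega_t$ by setting $\alpha:=f_t\circ g_t^{-1}$ on each boundary $\partial B(t)$, where $g_t\colon\mathbb D\to B(t)$ is a smoothly chosen family of Riemann mappings; this determines $\alpha$ up to composition with an element of $\Gamma$, and the two procedures are easily seen to invert each other.

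The main obstacle will be the smoothness of $\alpha$ at $0$ and $\infty$. Near $\{0\}\times\overline{\mathbb D}$, the foliation near the central leaf must be $C^\infty$-close to the standard one associated to $\alpha_0=\mathrm{id}$, because $\tilde{\Phi}$ is smooth there and $\nu_{(0,0)}\tilde{\Phi}=1$; this forces $\Omega_t$ to be a smooth perturbation of $B(t)$ as $t\to 0$, which translates into smoothness of $\alpha$ at the origin. The argument at $\infty$ is analogous, using the hypothesis that the geodesic has limit $\omega+dd^c\ln|z|^2$ as $\tau\to 0$, which pins down the asymptotic shape of $\Omega_t$ as $t\to 1$ to be a smooth perturbation of the standard $B(t)$ after inverting coordinates. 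Once these endpoint smoothness statements are established the bijection follows.
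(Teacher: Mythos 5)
Your proposal follows essentially the same route as the paper's own (admittedly sketchy) argument in Section~\ref{sec:extension}: the forward map via $\Omega_t=\alpha(B(t))$, Theorem~\ref{thm:existencepotential}, and the extended Theorem~\ref{thm:main}; well-definedness and injectivity modulo $\Gamma$ via uniqueness of the Hele-Shaw flow of $\omega_\phi$; and surjectivity via the harder direction of \cite[Theorem 3.1]{RW} to recognize the harmonic-disc foliation of a regular ray as coming from a simply connected Hele-Shaw flow, then reassembling $\alpha$ from the Riemann maps. The endpoint smoothness at $0$ and $\infty$ is handled with the same $C^\infty$-closeness-to-standard heuristic the paper uses, so your account is at the same level of rigor as the source.
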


\appendix
\section{Smoothness of Green's Functions}\label{appendix}

We first collect some regularity results for elliptic operators, all of which is essentially standard.  Suppose $I\subset \mathbb R$ is an open interval and $\{L_t\}_{t\in I}$ is a smoothly varying family of strictly elliptic operators on the unit disc $\mathbb D$ with uniform ellipticity constant.  That is, we suppose
\begin{equation}
L_t u = a^{ij}(x,t) D_{ij} u + b^i(x,t) D_i u + c(x,t) u\text{ for }t\in I\label{eq:ellipticfamily}
\end{equation}
where $a^{ij}, b^i,c \in C^{\infty}(\overline{\mathbb D}\times I)$ and $u$ is a function defined on $\mathbb D$, such that there is a $\lambda>0$ so that $a^{ij}(x,t)\zeta_i\zeta_j \ge \lambda |\zeta|^2$ for all $(x,t)\in \mathbb D\times I$ and $\zeta\in \mathbb R^N$.  We assume also $c(x,t)\le 0$ for $(x,t)\in \mathbb D\times I$.

Suppose now $\varphi\in C^{\infty}(\partial \mathbb  D\times I)$, and we write $\varphi_t(\cdot) = \varphi(\cdot,t)$.   Then for each $t\in I$ standard elliptic theory says \cite[Corollary 6.9, Theorem 6.19]{Gilbarg} there exists a unique $u_t\in C^{\infty}(\overline{\mathbb D})$ that solves
$$ L_t u_t = 0 \text{ and } u_t|_{\partial \mathbb D} = \varphi_t.$$
We claim that $u_t$ is also smooth in the $t$-variable.   It is sufficient to show it is smooth at any given fixed point $t_0\in I$, and replacing $t$ with $t-t_0$ we may assume $t_0=0$.   Then expanding $a^{ij},b^i,c$ in $t$ around $0$ we can write
$$ L_tu = L_0u + t M_1u + \cdots + t^N M_Nu + O(t^{N+1})u$$
for some operators $M_i$ that are independent of $t$.  Here and henceforth we work in the $C^{\infty}$-topology so the $O(t^{N+1})$ error terms means that for all $k\in \mathbb N$ there exists a $C_k$ such that this term is bounded by $C_k |t|^{N+1}$ in the $C^{k}(\overline{\mathbb D})$-norm.  We wish to find an expansion for $u_t$ in $t$,  say
\begin{equation}\label{eq:expansionut}
 u_t = u_0 + t v_1 + \cdots + t^N v_N + O(t^{N+1})
 \end{equation}
where $v_i \in C^{\infty}(\overline{\mathbb D})$.  To do so expand $\varphi = \varphi_0 + t \sigma_1 + \cdots + t^N \sigma_N + O(t^{N+1})$ where $\sigma_i\in C^{\infty}(\partial \mathbb D)$.  Then comparing coefficients of $t$ forces the $v_i$ to satisfy
\begin{align*}
 L_0 v_1 + M_1 u_0=0 &\text{ and } v_1 |_{\partial \mathbb  D} = \sigma_1\\
 L_0 v_2 + M_1 v_1 + M_2 u_0=0 &\text{ and } v_2 |_{\partial \mathbb D} = \sigma_2
 \end{align*}
and so forth.  So starting with $u_0$ we may inductively define $v_i$, and as $L_0$ is elliptic, the same elliptic regularity guarantees $v_i\in C^{\infty}(\overline{\mathbb D})$.    To see that \eqref{eq:expansionut} does actually hold, observe that by construction the difference $w_t := u_t - u_0 -tv_1  -\cdots v_Nt^N$ satisfies
$$ L_t w_t = O(t^{N+1}) \text{ and } w_t |_{\partial \mathbb D} = O(t^{N+1}).$$
Then, by elliptic theory again \cite[Corollary 8.7, Theorem 8.13]{Gilbarg} this implies $w_t = O(t^{N+1})$ in the $C^{\infty}(\overline{\mathbb D})$ topology (here we are using that the elliptic constant for $L_t$ is uniform over $t\in I$ to apply \cite[Corollary 8.7]{Gilbarg} uniformly over $I$), which gives \eqref{eq:expansionut}.  As this holds for all $N$,  the map $t\mapsto u_t$ is smooth in $t$, which implies $u\in C^{\infty}(\overline{\mathbb D}\times I)$ as claimed.

\begin{theorem}\label{thm:appmain}
Let $I\subset \mathbb R$ be an open interval, and assume that $\{\Omega_t\}_{t\in I}$ is a smoothly varying family of smoothly bounded simply connected domains in $\mathbb C$.   Let $\zeta$ be a function that is smooth on a neighbourhood of 
$$ \overline{\bigcup_I \partial \Omega_t},$$
and for each $t\in I$ let $v_t$ be the solution to the Dirichlet problem
\begin{equation}\Delta v_t=0 \text{ and } v_t|_{\partial \Omega_t} = \zeta|_{\partial \Omega_t}\label{eq:appdirichlet}
\end{equation}
Then $v_t$ varies smoothly with $t$.
\end{theorem}
\begin{proof}
We have that $\Omega_t = \alpha_t(\mathbb D)$ where $\alpha\colon \overline{\mathbb D}\times I\to \mathbb C$ is smooth and each $\alpha_t:\overline{\mathbb D} \to \overline{\Omega}_t\subset \mathbb C$ is a diffeomorphism.  Set
$$L_t(\tilde{u}) := (\Delta (\tilde{u}\circ \alpha_t^{-1})) \circ \alpha_t$$
where $\tilde{u}:\overline{\mathbb D}\to \mathbb R$ and $\Delta$ is the standard Laplacian on $\mathbb C$.  Thus if 
$$ \tilde{u} = \tilde{v}\circ \alpha_t$$
then
$$ L_t(\tilde{u})|_x = \Delta(\tilde{v})|_{\alpha_t(x)}.$$

Clearly $\{L_t\}_{t\in I}$ is a smoothly varying family of elliptic operators with uniform ellipticity constant, as in \eqref{eq:ellipticfamily}. Define $\varphi\in C^{\infty}(\partial \mathbb D\times I)$ by
$$\varphi(z,t):= \zeta(\alpha_t(z)).$$
Then by the above discussion we know there exists a $u\in C^{\infty}(\overline{\mathbb D}\times I)$ such that
$$ L_t u_t =0 \text{ and } u_t|_{\partial \mathbb D} = \varphi(\cdot,t).$$
Hence
$$v(z,t) := u(\alpha_t^{-1}(z),t)$$
satisfies \eqref{eq:appdirichlet}, and varies smoothly in $t$.
\end{proof}

\begin{corollary}\label{cor:ptsmoothint}
Assume in addition that each $\Omega_t$ contains the origin.  Then the pressure $p_t$ which satisfies
$$\Delta p_t = -\delta_0 \text{ and } p_t|_{\partial \Omega_t}=0$$
varies smoothly in $t$.
\end{corollary}  
\begin{proof}
Apply Theorem \ref{thm:appmain} to $\zeta(z) :=  \log |z|^2$ and let $p_t = v_t - \log |z|^2$.
\end{proof}

\begin{remark}
Hence the quantity $\nabla p_t$ on $\partial \Omega_t$ is a smooth vector field on $\cup_{t\in I} \partial \Omega_t$ which is precisely what we used in the proof of Theorem \ref{thm:existencepotential}.
\end{remark}

\begin{corollary}\label{cor:riemannmapssmoothint}
Continue to assume that each $\Omega_t$ contains the origin.  Then there is a family of Riemann maps $f_t: \mathbb D\to \Omega_t$ that vary smoothly with $t$.
\end{corollary}
\begin{proof}
This is just the standard way of constructing Riemann maps from solutions to the Dirichlet problem.  In fact if $\Delta v_t=0$ on $\Omega_t$ and $v_t = \log |z|$ on $\partial \Omega_t$ we let
$$ g_t(z) = ze^{v_t + iw_t}$$
where $w_t$ is a harmonic conjugate to $v_t$ (i.e.\ chosen so $v_t+iw_t$ is holomorphic).  Then $g_t:\Omega_t\to \mathbb D$ is a holomorphic map taking $\partial \Omega_t$ to $\partial \mathbb D$.  One shows that moreover $g_t:\Omega_t\to \mathbb D$ is a biholomorphism, and the Riemann map $f_t := g_t^{-1}$ varies smoothly with $t$ as $v_t$ does.

\end{proof}

\medskip
\small{
\noindent {\sc Julius Ross,  DPMMS , University of Cambridge, UK. \\
j.ross@dpmms.cam.ac.uk}\medskip

\noindent{\sc David Witt Nystr\"om, DPMMS,  University of Cambridge, UK. \newline d.wittnystrom@dpmms.cam.ac.uk, danspolitik@gmail.com}

}


\begin{thebibliography}{99}

\bibitem{ArezzoTian} C.\ Arezzo and G.\ Tian \emph{Infinite geodesic rays in the space of K\"ahler potentials.} Ann. Sc. Norm. Super. Pisa Cl. Sci. (5) 2 (2003), no. 4, 617--630.

\bibitem{Bedford} E.\ Bedford and J.-P. Demailly \emph{Two counterexamples concerning the pluri-complex Green function in ${\bf C}^n$.} Indiana Univ. Math. J. 37 (1988), no. 4, 865--867. 

\bibitem{BedfordTaylor} E. Bedford and B. A. Taylor \emph{The Dirichlet problem for a complex Monge-Amp\`ere equation.} Invent. Math. 37 (1976), no. 1, 1--44.

\bibitem{Blocki} Z.\ B\l ocki \emph{The $C^{1,1}$ regularity of the pluricomplex Green function.} Michigan Math. J. 47 (2000), 211--215.

\bibitem{Blocki2} Z.\ B\l ocki  \emph{On geodesics in the space of K\"ahler metrics.} Advances in geometric analysis, 3--19, Adv. Lect. Math. (ALM), 21, Int. Press, Somerville, MA, 2012. 

\bibitem{Chen} X.X.\ Chen  \emph{The space of K\"ahler metrics} J. Differential Geom. 56 (2000), no. 2, 189--234. 

\bibitem{Darvas} T.\ Darvas \emph{Morse theory and geodesics in the space of K\"ahler metrics.} Proc.\ Amer.\ Math.\ Soc.\ 142 (2014), no. 8, 2775--2782. 

\bibitem{Don3} S.\ Donaldson \emph{Symmetric spaces, K\"ahler geometry and Hamiltonian dynamics.} Northern California Symplectic Geometry Seminar, 13--33,  Amer. Math. Soc. Transl. Ser. 2, 196, Amer. Math. Soc., Providence, RI, 1999.

\bibitem{Donaldson}  S.\ Donaldson \emph{Holomorphic discs and the complex Monge-Amp\`ere equation}.  J. Symplectic Geom. 1 (2002), no. 2, 171--196. 

\bibitem{Gustafsson} B.\ Gustafsson and A.\  Vasil'ev \emph{Conformal and potential analysis in Hele-Shaw cells.} Advances in Mathematical Fluid Mechanics. Birkh\"auser Verlag, Basel, 2006. x+231.

\bibitem{Gilbarg} D.\ Gilbarg and N.\ Trudiger \emph{Elliptic partial differential equations of second order.} Reprint of the 1998 edition. Classics in Mathematics. Springer-Verlag, Berlin, 2001. xiv+517 pp. ISBN: 3-540-41160-7. 

\bibitem{Hedenmalm} H.\ Hedenmalm and S.\ Shimorin \emph{Hele-Shaw flow on hyperbolic surfaces.} J. Math. Pures Appl. (9) 81 (2002), no. 3, 187--222.

\bibitem{HedenmalmOlofsson} H.\ Hedenmalm and A.\ Olofsson \emph{Hele-Shaw flow on weakly hyperbolic surfaces.} Indiana Univ. Math. J. 54 (2005), no. 4, 1161--1180.  

\bibitem{Krantz} S.\ Krantz \emph{Function theory of several complex variables.} Reprint of the 1992 edition. AMS Chelsea Publishing, Providence, RI, 2001. 

\bibitem{Lempert} L.\ Lempert \emph{La m\'etrique de Kobayashi et la repr\'esentation des domaines sur la boule.} Bull. Soc. Math. France 109 (1981), no. 4, 427--474.


\bibitem{LempertVivas} L.\ Lempert and L.\ Vivas \emph{Geodesics in the space of K\"ahler metrics.}
Duke Math. J. 162 (2013), no. 7, 1369--1381.

\bibitem{LempertDarvas}L. Lempert and T. Darvas \emph{Weak geodesics in the space of K\"ahler metrics} Math. Res. Lett. 19 (2012), no. 5, 1127--1135. 

\bibitem{Mabuchi} T.\ Mabuchi
\emph{Some symplectic geometry on compact K\"ahler manifolds. I.} Osaka J. Math. 24 (1987), no. 2, 227--252.

\bibitem{Richardson} S.\ Richardson \emph{Hele-Shaw flows with a free boundary produced by the injection of fluid into a narrow channel.} J. Fluid Mech., 56 (1972), no. 4, 609-618.

\bibitem{RossNystromTubular} J.\ Ross and D.\ Witt Nystr\"om \emph{Homogeneous Monge-Amp\`ere Equations and Canonical Tubular Neighbourhoods in K\"ahler Geometry.}  IMRN, Volume 2017, Issue 23 (2017), Pages 7069--7108.


\bibitem{RWDisc} J.\ Ross and D.\ Witt Nystr\"om \emph{The Hele-Shaw flow and Moduli of Holomorphic Discs.} Compos.\ Math.\ 151 (2015), no.\ 12, 2301--2328. 

\bibitem{RW} J.\ Ross and D.\ Witt Nystr\"om \emph{Harmonic Discs of Solutions to the Complex Homogeneous Monge-Amp\`ere Equation.}  Publ.\ Math.\ Inst.\ Hautes \`{E}tudes Sci.\ 122 (2015), 315--335. 


\bibitem{RubinsteinZelditchI} Y.\ Rubinstein and S.\ Zelditch \emph{The Cauchy problem for the homogeneous Monge-Amp\`ere equation, I. Toeplitz quantization.} J. Differential Geom. 90 (2012), no. 2, 303--327. 

\bibitem{RubinsteinZelditchII} Y.\ Rubinstein and S.\ Zelditch  \emph{The Cauchy problem for the homogeneous Monge-Amp\`ere equation, II. Legendre transform.}  Adv. Math. 228 (2011), no. 6, 2989--3025. 

\bibitem{RubinsteinZelditchIII} Y.\ Rubinstein and S.\ Zelditch \emph{The Cauchy problem for the homogeneous Monge-Amp\`ere equation, III. Lifespan.} J. Reine Angew. Math. 724 (2017), 105--143.

\bibitem{Semmes} S.\ Semmes \emph{Complex Monge-Amp\`ere and symplectic manifolds} Amer. J. Math. 114 (1992), no. 3, 495--550.

\end{thebibliography}
\end{document}